\documentclass[11pt,a4paper]{article}
\usepackage[T1]{fontenc}
\usepackage[utf8]{inputenc}
\usepackage{lmodern,microtype}
\usepackage[margin=25mm]{geometry}
\usepackage{amsmath,amssymb,amsthm,mathtools}
\usepackage{enumitem}
\usepackage{graphicx}
\usepackage{booktabs,tabularx}
\usepackage[colorlinks=true,linkcolor=blue,citecolor=blue,urlcolor=blue]{hyperref}
\newtheorem{theorem}{Theorem}[section]
\newtheorem{proposition}[theorem]{Proposition}
\newtheorem{lemma}[theorem]{Lemma}
\setlist{itemsep=2pt,topsep=4pt}
\allowdisplaybreaks[2]
\newcommand{\norm}[1]{\|#1\|}

\newcommand{\bbR}{\mathbb{R}}  \newcommand{\bbZ}{\mathbb{Z}}  
\newcommand{\Pp}{\mathbb{P}}
\newcommand{\surcharge}{\mathfrak{s}}

\title{Non-perturbative finite connectivities in supercritical Bernoulli percolation on \(\mathbb{Z}^d\) with \(d\geq 3\)}

\author{Yacine Aoun \and Kamil Khettabi\thanks{
Section de mathématiques, Université de Genève, Switzerland.
Email: \texttt{kamil.khettabi@unige.ch}
}
}

\usepackage{xcolor}
\begin{document}

\maketitle
\begin{abstract}
    We solve a longstanding problem in Bernoulli percolation: we derive sharp Ornstein--Zernike asymptotics for the probability that two distant points are connected by a finite cluster in the whole supercritical regime. Our proof has two main ingredients. First, the renewal structure of~\cite{FH} yields an \emph{a priori} bound on cluster volume. Second, we adapt the coarse-graining and irreducible-cluster framework of~\cite{CIV08} to finite supercritical connections. The main distinction from the subcritical analysis is that our argument \emph{does not require} a separation of masses between irreducible and unrestricted finite connections. The volume bound and a modified coarse-graining construction instead provide stretched-exponential tails for the effective random-walk increments, which suffice to obtain the sharp asymptotics.
\end{abstract}
\section{Introduction}
\subsection{Context}
The problem of establishing a sharp control on general two-point functions (covariance, connectivity functions, Green functions) has a long history in mathematics and physics. It can be traced back to the original work of Ornstein and Zernike on density fluctuations in fluids~\cite{OZ14}. The resulting Ornstein--Zernike (OZ) asymptotics describe exponential decay with a polynomial prefactor of order $\|x\|^{-(d-1)/2}$, where $\norm{\cdot}$ is the Euclidean norm. Since then, the OZ asymptotics became a cornerstone of the study of two-point functions in statistical mechanics: see for instance~\cite{F64,CF71,AK77,PL78}.

For Bernoulli percolation, the first non-perturbative OZ asymptotics for the probability of two distant points to be in the same cluster were derived for the subcritical regime by Campanino, Chayes and Chayes~\cite{CCC91} for axial directions, and later extended to arbitrary directions by Campanino and Ioffe~\cite{CI02}. The theory was then extended for the finite-range Ising model above the critical temperature~\cite{CIV03} and for the subcritical finite-range FK percolation model by Campanino, Ioffe and Velenik~\cite{CIV08}. For the infinite-range Ising model with exponentially decaying interactions, the OZ asymptotics can fail~\cite{AOV23, AIOV21}. The OZ asymptotics were established under appropriate assumptions in the subcritical and non-saturated regime by Aoun, Ott and Velenik~\cite{AOV24}. At the saturated point (where the transition between the saturated and unsaturated regime occurs), we established~\cite{AK23} the OZ asymptotics up to constants on \(\bbZ\) under appropriate assumptions.

For the supercritical counterpart, Bricmont and Fröhlich~\cite{BF85} established the OZ asymptotics for spin covariances in the Ising model at very low temperatures on \(\bbZ^d\) with \(d\geq 3\). For Bernoulli percolation, the two-point function is the probability of two points of being connected by a finite cluster. On \(\bbZ^2\), Campanino, Ioffe and Louidor~\cite{CIL10} derived a different type of asymptotics (of order \( \|x\|^{-2}\) instead of \(\|x\|^{-1/2}\) that would be given by the OZ formula) in the whole supercritical phase. In dimension \(d\geq 3\), the OZ asymptotics were derived for \(p\) sufficiently close to \(1\) by Braga, Procacci and Sanchis~\cite{BPS04} for the axial directions and by Campanino and Gianfelice~\cite{CG11} for an arbitrary direction (and later extended to FK percolation~\cite{CG15}). More recently, Fribergh and Hammond~\cite{FH} proved sharp point-to-plane asymptotics in the whole supercritical phase using a renewal structure with stretched-exponential tails on \(\bbZ^d\) with \(d\geq 3\). In the present work, we prove the last remaining asymptotics for nearest neighbor Bernoulli percolation on \(\bbZ^d\): we establish the OZ asymptotics in the whole supercritical regime in dimension \(d\geq 3\).

\subsection{Basic definitions and properties}
Consider nearest-neighbour Bernoulli bond percolation on $\mathbb Z^d$ with \(d\geq 3\): each edge of \(\bbZ^d\) is declared \emph{open} with probability \(p\in (0,1)\) and \emph{closed} with probability \(1-p\), independently of all other edges. We denote the corresponding measure by \(\mathbb P_p\). We say that \(x,y\in\bbZ^d\) are connected if there exists a sequence of open edges from \(x\) to \(y\). We denote this event by \(\lbrace x\leftrightarrow y\rbrace\). There exists a critical parameter \(p_c(d)\) such that almost surely there is not an open infinite connected component when \(p<p_c\) and almost surely there is an open infinite connected component when \(p>p_c\), see~\cite{G99}. From now on, we consider \(p\in(p_c,1)\). Write $C_0$ for the open cluster of $0$,
regarded as a graph, and $|C_0|$ for its number of vertices. Set
\begin{align*}
 G_p(x)&=\mathbb P_p(0\leftrightarrow x,\ |C_0|<\infty),\\
 \xi_p(z)&=-\lim_{k\to\infty}\frac1k\log G_p(kz)
 \quad(z\in\mathbb Z^d).
\end{align*}
The limit above exists and its continuous homogeneous extension defines a norm on \(\bbR^d\), see~\cite{CCGKS89,G99, CG15}. We will denote the Euclidean norm by \(\|\cdot\|\).

\subsection{Main results}

Our first theorem establishes a control of the volume of the cluster of two distant points at the exponential scale: 
\begin{theorem}\label{thm:main}
There exist $K,C,c\in(0,\infty)$, depending only on $p,d$, such that,
for every $x\in\mathbb Z^d\setminus\{0\}$,
\begin{equation}\label{eq:main}
 \mathbb P_p(0\leftrightarrow x,\ K\norm{x}<|C_0|<\infty)
 \le C\exp\left\{-\xi_p(x)-c\|x\|^{\frac{d-1}{2d}}\right\}.
\end{equation}
\end{theorem}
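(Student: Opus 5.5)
The plan is to combine a surface-order cost estimate for large finite clusters with the renewal (cone-point) structure of~\cite{FH}. The exponent $\frac{d-1}{2d}$ strongly suggests the following mechanism. A finite cluster of volume $n$ in the supercritical phase must be enclosed by a closed cutset. By the Kesten--Zhang / Kesten surface-order large deviation bound, $\mathbb P_p(n\le |C_0|<\infty)\le \exp(-c n^{(d-1)/d})$. So a cluster of volume $V\ge K\|x\|$ pays an extra cost of order $V^{(d-1)/d}$ on top of the connection cost. The square root in $\frac{d-1}{2d}$ indicates that we only win this surcharge on a portion of the cluster of volume of order $\sqrt{\|x\|}$. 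Alternatively, the volume-$n$ piece costs $n^{(d-1)/d}$, and optimizing against an entropy or geometric loss yields $\|x\|^{(d-1)/(2d)}$.

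\textbf{Step 1 (renewal decomposition).} Following~\cite{FH}, decompose a typical finite connection $0\leftrightarrow x$ into a concatenation of irreducible pieces separated by renewal (cone) points. The number of pieces is of order $\|x\|$, and their sizes have stretched-exponential tails. Crucially, the renewal structure there gives a factorization at exponential scale: the probability of the concatenation is at most $e^{-\xi_p(x)}$ times a sum over piece-geometries with subexponential weights. I would first state a lemma: with probability $\le C e^{-\xi_p(x)-c\|x\|^{\alpha}}$, the cluster fails to have at least $\delta\|x\|$ renewal points spaced at most $\|x\|^{1/2}$ apart (a coarse-graining at scale $\sqrt{\|x\|}$).

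\textbf{Step 2 (localizing excess volume).} On the good event, partition the cluster into the slabs between consecutive renewal points. If $|C_0|>K\|x\|$ with $K$ large, then either the total volume of the pieces exceeds $K\|x\|/2$, or some pieces are abnormally large. Since each piece has stretched-exponential volume tails (from~\cite{FH}), the sum of $\sim\|x\|$ independent-like pieces exceeds $K\|x\|$ only with probability $e^{-c\|x\|^{\beta}}$ relative to $e^{-\xi_p(x)}$. This holds via a standard large deviation for heavy-tailed sums: the cheapest way is a single big jump of size $\sim\|x\|$, with cost $\exp(-c\|x\|^{(d-1)/d})$ from the surface-order estimate applied inside a single renewal block.

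Here the block-scale restriction matters. A single block of diameter $\le\sqrt{\|x\|}$ can hold volume at most $\|x\|^{d/2}$, but we need the cost estimate conditional on the block's boundary. Carefully, I would show that a block carrying excess volume $v$ costs an additional factor $\exp(-c\min(v,\|x\|^{1/2})^{(d-1)/d})$ over its typical weight. This is done by comparing the block configuration to one with the dangling finite part removed, i.e.\ closing its boundary edges locally with a bounded-energy modification. Summing gives a surcharge at least $c\|x\|^{(d-1)/(2d)}$, which matches the claimed exponent.

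\textbf{Main obstacle.} The hardest step is the conditional surface-order bound inside a block: showing that excess volume attached to the backbone genuinely costs a surface term relative to $e^{-\xi_p}$, rather than being absorbed into the free energy. I would try a finite-energy/insertion-tolerance surgery. Replace the excess part by a minimal configuration and control the combinatorial loss via a Peierls count of closed cutsets enclosing volume $v$. That count is bounded by the Kesten--Zhang argument at $p>p_c$ using the Grimmett--Marstrand slab renormalization, which is valid throughout the supercritical phase in $d\ge3$.
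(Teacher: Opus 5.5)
There is a genuine gap in Steps 1 and 2, and the proposal misidentifies where the exponent comes from.

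\textbf{Step 1: no link between the renewal structure and $\xi_p(x)$.} You never connect the renewal structure of \cite{FH} to the point-to-point rate $\xi_p(x)$. That structure is a point-to-half-space object: strings in a fixed direction $\ell$, with rate $\zeta_\ell$ and i.i.d.\ irreducible pieces under $\mu_\ell$. The missing idea is the duality $\zeta_\ell\ell\in\partial\mathsf K_p$ (Lemma~\ref{lem:point-plane-duality}). It lets you choose $\ell$ with $\zeta_\ell\,\ell\cdot x=\xi_p(x)$ and use $\{0\leftrightarrow x\}\subset\{H_\ell(C_0)\ge\ell\cdot x\}$ at no exponential loss. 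Add the observation that any finite cluster becomes a string after attaching two edges, injectively and with weight comparable to $\mathbb P_p(C_0=A)$. The whole problem then becomes a bound on $\sum_N\mathbb E_{\mu_\ell}[e^{-\zeta_\ell T_N}W_N\mathbf 1\{T_N\ge u,\,W_N>K\norm{x}\}]$, with genuinely independent pieces. Without this, your Step~1 lemma has nothing to stand on. It asserts $\delta\|x\|$ renewal points along the point-to-point connection, spaced at most $\|x\|^{1/2}$ apart, up to an error $e^{-\xi_p(x)-c\|x\|^{\alpha}}$. That is an Ornstein--Zernike cone-point (mass-gap) statement, and you do not prove it. In the paper, such statements come from coarse-graining whose energy estimate needs $|C_0|\le M\|x\|$ as input. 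So they are consequences of the theorem you are proving, and within this framework your route is circular. Even in the half-space setting, the height tail $e^{-c\sqrt u}$ from \cite{FH} only controls gaps of size $\|x\|^{1/2}$ at cost $e^{-c\|x\|^{1/4}}$, not $e^{-c\|x\|^{\alpha}}$.

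\textbf{Step 2: the mechanism and the exponent.} \cite{FH} gives tails for heights, not volumes. Deriving the volume tail under $\mu_\ell$ is exactly where $\frac{d-1}{2d}$ comes from:
\[
\mu_\ell(V_1\ge v)\le\mu_\ell(H_1\ge av^\alpha)+e^{\zeta_+av^\alpha}\sum_{S:\,v(S)\ge v}w_\ell(S)\le Ce^{-cv^{\alpha/2}}+Ce^{-c_0v^\alpha/2}.
\]
This uses $w_\ell(S)\le(1-p)^{-4d}\mathbb P_p(C_0=S)$ and the unconditional Kesten--Zhang bound. The square root is the \cite{FH} height tail evaluated at height $v^\alpha$, not a block scale $\sqrt{\|x\|}$. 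No conditional surface-order estimate, surgery or Peierls count is needed, so the step you call the main obstacle is unnecessary. As written it is also inconsistent: a single big jump costing $e^{-c\|x\|^{(d-1)/d}}$ would give exponent $\alpha$, not $\beta$. The per-block cost $\min(v,\|x\|^{1/2})^{(d-1)/d}$ is neither derived nor shown to sum to the claimed surcharge.

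Once the volume tail is in hand, the rest is routine. With $n=\norm{x}$, a truncated exponential Chebyshev bound gives $\mathbb P_{\mu_\ell}(W_N>Kn)\le Ce^{-cn^\beta}$ for $N\le An$. This covers the regime $T_N\le 2n$, since $H_i\ge r\ge d^{-1/2}$. The regime $T_N>2n$ is handled by the surplus factor $e^{-\zeta_\ell n/2}$ together with finiteness of $\sum_N\mathbb E_{\mu_\ell}[W_Ne^{-\zeta_\ell T_N/4}]$.
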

The proof of this theorem will use the setting of~\cite{FH}. We prove this theorem in Section~\ref{section:Hammond}. Our second theorem is the main result of this paper:
\begin{theorem}\label{theorem:OZ}
There exists a continuous function $\Psi_p:\mathbb S^{d-1}\to(0,\infty)$ such that, uniformly in \(x\in\bbZ^d\) as \(\|x\|\rightarrow\infty\),
\[
 \mathbb P_p(0\leftrightarrow x, |C_0|<\infty)=
 \frac{\Psi_p(x/\|x\|)}{\|x\|^{(d-1)/2}}e^{-\xi_p(x)}(1+o_{\|x\|\rightarrow\infty}(1)),
\]
\end{theorem}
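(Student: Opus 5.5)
The plan is to follow the Campanino--Ioffe--Velenik route of~\cite{CIV08}: represent $G_p(x)$ as the Green function of an effective directed random walk with exponentially tilted, stretched-exponentially tailed steps, and then apply a local limit theorem. The first step restricts to finite clusters with small volume. By Theorem~\ref{thm:main}, $G_p(x)$ equals $\mathbb P_p(0\leftrightarrow x,\ |C_0|\le K\|x\|)$ up to an error of order $e^{-\xi_p(x)-c\|x\|^{(d-1)/2d}}$. This error is negligible against $\|x\|^{-(d-1)/2}e^{-\xi_p(x)}$. On the remaining event, $C_0$ is a finite cluster of linear size, bounded by a closed edge-boundary $\partial_e C_0$ of size $O(\|x\|)$. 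I would encode the event as a ``surface-decorated'' polymer: the cluster together with its closed boundary. Its weight is $p^{|E(C_0)|}(1-p)^{|\partial_e C_0|}$ times the probability that the outside configuration does not reconnect. Since the cluster must be finite, the only non-local effect comes from the infinite cluster outside. This effect is handled by the fact that a closed boundary surface fully separates $C_0$.

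Second, fix the direction $s=x/\|x\|$ and a dual vector $t$ with $\xi_p(s)=t\cdot s$. I would perform a coarse-graining at a large scale $K_0$, as in~\cite{CIV08}, adapted so that the skeleton follows $C_0\cup\partial_eC_0$. The goal is a cone-confinement statement: with probability $1-e^{-c\|x\|}$ relative to $e^{-\xi_p(x)}$, the skeleton has a positive density of ``cone points'' in the forward cone $\mathcal Y_\delta(s)$. The surcharge function $\surcharge(y)=\xi_p(y)-t\cdot y$ is then used in the usual way. The key input for cone points in CIV is that the two-point function restricted to a cylinder is not much smaller than the full one, together with a mass gap. Here I replace the mass gap by the volume bound. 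Clusters with many ``backtracking'' pieces of positive surcharge cost at least $e^{-c\cdot(\text{surcharge})}$. The $K\|x\|$ volume control makes the number of coarse-grained configurations at most exponential in the linear size, which matches the surcharge cost once $K_0$ is large. This yields the decomposition $G_p(x)\approx\sum \rho_L * \mathbf p^{*n}*\rho_R(x)$. Here $\mathbf p$ is the weight of irreducible (cone-point to cone-point) pieces, and the pieces are glued via the Markov property across closed boundary plaquettes at cone points. Finite energy allows one to force a pivotal, thin ``neck'' at each cone point. Gluing therefore only introduces a multiplicative constant that is absorbed into $\mathbf p$.

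Third, show that $e^{t\cdot y}\mathbf p(y)$ is a probability distribution with stretched-exponential tails. Normalization follows from $\xi_p(s)=t\cdot s$ being the true rate, through a renewal identity. The tails come from Theorem~\ref{thm:main}: an irreducible piece of displacement $\ell$ has volume at most $K\ell$ except with probability $e^{-\xi_p-c\ell^{(d-1)/2d}}$. On the low-volume event, the coarse-grained cone-point density gives exponential tails. I expect this step to be the main obstacle. Without a separation of masses, one cannot bound the irreducible weight by $e^{-(\xi_p+\epsilon)\ell}$. Instead one must show directly that long stretches without cone points are stretched-exponentially rare, using only the volume bound and the surcharge estimates. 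This requires care because the surrounding closed surface introduces correlations between consecutive pieces. I would first try to make pieces independent by conditioning on the closed boundary at cone-point cylinders.

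Finally, the walk with i.i.d.\ non-degenerate steps with stretched-exponential tails satisfies a sharp local limit theorem. This gives $\sum_n \mathbf p^{*n}(y)e^{t\cdot y}\sim c(s)\|y\|^{-(d-1)/2}$ uniformly for $y$ in a cone around $s$. The boundary pieces $\rho_L,\rho_R$ also have summable tilted weights, so they contribute only a constant. Combining these gives the formula, with $\Psi_p(s)$ expressed through the covariance of the tilted step and the masses of $\rho_{L},\rho_R$. Continuity of $\Psi_p$ and uniformity in $s$ follow because all these quantities depend continuously on $t$, with uniform tail bounds over directions.
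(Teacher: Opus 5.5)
Your overall architecture is the paper's: restrict to $|C_0|\le K\norm{x}$ via Theorem~\ref{thm:main}, coarse-grain as in \cite{CIV08}, get a positive density of cone-points, decompose into irreducible pieces with stretched-exponential tails, and apply a local limit theorem. What is missing is the step that makes coarse-graining work in the supercritical phase, namely the energy estimate.

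In this regime a local connection from a skeleton vertex to distance $K$ is not rare: its probability is at least $\theta(p)$. So the factor $e^{-K}$ per box can only come from the closed edges that make $C_0$ finite. The event $\{0\leftrightarrow x,\ |C_0|<\infty\}$ is not monotone. Moreover, the piece of $C_0$ inside a box is not itself a finite cluster, since its finiteness is certified by closed edges mostly outside the box. There is therefore no local event of probability $\approx e^{-K}$ to which a BK-type decoupling could be applied. Your claim that backtracking clusters ``cost at least $e^{-c\cdot(\text{surcharge})}$'' is exactly the estimate that needs proof, and you give no mechanism for it.

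The paper's mechanism is Lemma~\ref{lem:deco}. Multiplying by $(1-p)^{b_D(C_z^D)}$, i.e.\ pretending that the edges through which the piece leaves the box are closed, turns the piece into a genuine finite cluster contained in the box. By \eqref{eq:pointwise}, its weight is at most $\sum_{\xi_p(y)\ge K-a}e^{-\xi_p(y)}\le cK^de^{-K}$. The price is $\prod_i(1-p)^{-B_i}$. The boxes are $\xi_p$-balls whose radius $r_i\in\{K,\dots,K+\lfloor\eta K\rfloor\}$ is chosen to minimise the number of crossing edges, so \eqref{eq:boundary-cost} gives $\sum_iB_i\le c|C_0|/(\eta K)\le cMn/(\eta K)$. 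This is where the volume bound enters: it makes the decoupling cost an exponential error of rate $O(M/(\eta K))$, negligible once $K$ is large. You instead use the volume bound to count coarse-grained configurations. But the skeleton entropy $e^{cm\log K}$ (Lemma~\ref{lem:entropy}) needs no volume input, so in your plan the key input does no work and the energy bound stays unproved.

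Two further steps you import from the classical theory depend on the mass gap you correctly say is unavailable.

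\emph{Normalization.} In \cite{CIV08}, $H(t)=1$ follows from the exact rate because exponential tails make $H,R,B,F$ finite slightly outside $\mathsf K_p$, where $G=\infty$. With only stretched-exponential tails nothing is controlled outside $\mathsf K_p$. Then $H(t)<1$ is compatible with $e^{t\cdot x}G_p(x)=e^{-o(\norm{x})}$, so ``$\xi_p(s)=t\cdot s$ is the true rate'' forces nothing. The renewal identity yields $H(t)=1$ only once you know $\sum_ye^{t\cdot y}G_p(y)=\infty$ for $t\in\partial\mathsf K_p$. The paper gets this divergence from the Fribergh--Hammond renewal in Lemma~\ref{lem:point-plane-duality}: $t=\zeta_\ell\ell$, and the tilted string weights sum to $\sum_{N\ge1}1$.

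\emph{Drift direction.} The local limit theorem gives $\norm{x}^{-(d-1)/2}$ behaviour at $x$ only if $\mathbb E_{t,0}[X_1]$ is parallel to $x$. Classically this comes from $\nabla H(t)\perp\partial\mathsf K_p$, which is again an analyticity statement. The paper replaces it with Proposition~\ref{prop:drift}, which uses only $H\le1$ on $\mathsf K_p$, $H=1$ on $\partial\mathsf K_p$, and a second-order Taylor bound from finite moments. For the same reason, asymptotics ``uniformly for $y$ in a cone around $s$'' at a fixed $t$ are false: the tilt must be dual to each $y$.

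Finally, the correlations between consecutive pieces that you worry about do not occur in Bernoulli percolation. $\Pp(C_0=C)=p^{|E(C)|}(1-p)^{|\partial_eC|}$ carries no ``outside reconnection'' factor. With the slab condition at cone-points, the weight factorises exactly, as in \eqref{eq:factorisation}.
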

The proof of this theorem goes through an extension of the modern Ornstein-Zernike theory. We prove this theorem in Section~\ref{section:OZ}.
\subsection{Idea of the proof}

As already mentioned, our proof has two ingredients : a control on the volume of the cluster \(C_0\) on \(\{0\leftrightarrow x\}\) and a modification of the modern OZ theory. Let us explain quickly the main ideas :

For the first result, fix $x\ne0$ and choose $\ell\in\mathbb S^{d-1}$ such that
$\zeta_\ell\,\ell\cdot x=\xi_p(x)$, where $\zeta_\ell$
is the point-to-plane decay rate of \cite{FH}.
Set $u=\ell\cdot x$ and
$\mathcal H_u=\{z\in\mathbb Z^d:\ell\cdot z\ge u\}$.
Since $x\in\mathcal H_u$,
\[
 \mathbb P_p(0\leftrightarrow x,\ K\norm{x}<|C_0|<\infty)
 \le
 \mathbb P_p(0\leftrightarrow\mathcal H_u,\
                      K\norm{x}<|C_0|<\infty).
\]
The choice of $\ell$ ensures that this comparison preserves
the exponential cost of the connection.

Using the string decomposition of \cite{FH}, we then extract
a factor $e^{-\zeta_\ell u}=e^{-\xi_p(x)}$ from the RHS.
We then show that
\[
 \mu_\ell(V_1\ge v)\le Ce^{-cv^{\alpha/2}}.
\]
A routine large-deviation estimate then yields
\[
 \mathbb P_p(0\leftrightarrow\mathcal H_u,\
                      K\norm{x}<|C_0|<\infty)
 \le C e^{-\xi_p(x)-c\norm{x}^\beta},
\]
We may therefore restrict to clusters of volume at most
$K\norm{x}$ up to a stretched exponential error.

Then, we develop a OZ theory for the supercritical finite connections. The main skeleton of the proof stays the same : to every cluster, we associate a skeleton and we derive an appropriate energy estimate for every skeleton. The standard problem when deriving asymptotics in this way is that it might happen that some parts of the cluster are very densely connected, and hence on a local level it might be impossible to extract any energy. To circumvent this problem, we notice that the a priori control on the volume of the connected component \emph{prevents} this from happening too often, yielding overall a 'missing' exponential cost that can be made (on an exponential scale) arbitrarily small, hence removing the problem altogether. We establish that, up to an exponential error, there is a positive density of cone-points. In a sense, this result establishes a \emph{mass-gap} of some particular connections. Combined with the first result above, this allows us to construct a directed random walk with stretched exponential tails, which are enough to sharply derive the wanted asymptotics.

\subsection{Extensions}

Our results suggest several directions for further research. A natural extension would be to prove the OZ asymptotics in the supercritical phase for a general finite-range Bernoulli percolation model. This would require an extension of the renewal theory from~\cite{FH} as well as extending the OZ theory for this setting (the latter seems to be a more direct adaptation). 

Another natural direction would be extending our results for dependent percolation models such as FK percolation. This would extend the perturbative results of~\cite{CG15} in the whole supercritical regime. 

Another (potentially more challenging) extension would be to apply our framework to study the low-temperature covariances in Ising and Potts models. For the Ising model, possible approaches include the random current representation~\cite{A82} and the recent double cluster swapping representation introduced by van Engelenburg, Heeney and Lis~\cite{EHL26}, which provides a percolation representation of truncated correlations.

\subsection{Definitions and inputs}
For a finite graph $A$, write $\operatorname{Vert}(A)$ and $E(A)$ for its vertex and edge sets, and $v(A):=|\operatorname{Vert}(A)|$. Inclusions of graphs in regions and intersections with regions refer to their vertex sets unless edges are explicitly mentioned. For a vertex set $D$, $A\setminus D$ denotes the subgraph induced by $\operatorname{Vert}(A)\setminus D$. For an edge viewed as an unordered pair of endpoints, $|e\cap D|$ is the number of its endpoints in $D$. Unless specified otherwise, $c,C>0$ are constants whose values may change from line to line.
\\We will need the following supercritical estimates
\begin{align}
 \Pp(v\le |C_0|<\infty)&\le Ce^{-cv^\alpha},\notag\\
 \Pp\bigl(|C_0|<\infty,\ \max_{z\in C_0}\|z\|\ge u\bigr)&\le Ce^{-cu}.
 \label{eq:sharpness}
\end{align}
where $\alpha=(d-1)/d=:2\beta$. The first follows from Kesten--Zhang and Grimmett--Marstrand
\cite{KZ90,GM90}, for the second see \cite{CCGKS89,GM90}.

A direct consequence of \cite[Theorem~1.2]{BHK06} is that
\begin{equation}\label{eq:pointwise}
 G_p(x)\le e^{-\xi_p(x)}.
\end{equation}

We denote by \(U_p\) the unit ball associated to the norm \(\xi_p\) and \(K_p\) its polar body
\begin{align*}
\mathsf U_p &= \{y:\xi_p(y)\le1\}\\
 \mathsf K_p &=\{t\in\mathbb R^d:t\cdot y\le\xi_p(y)
                              \text{ for all }y\in\mathbb R^d\}.
\end{align*}
\section{A priori volume control for long finite clusters}\label{section:Hammond}
Firstly, we are going to introduce some definitions and recall few results from~\cite{FH}. The, we prove Theorem~\ref{thm:main}.
\subsection{Definitions and inputs from~\texorpdfstring{\cite{FH}}{FH}}
Fix now $\ell\in\mathbb S^{d-1}$ and choose a vector $e\in\{\pm e_1,\ldots,\pm e_d\}$
such that
\[
 r=\ell\cdot e=\max_i|\ell_i|\in[d^{-1/2},1].
\]
We are going to import some of the technology developed in \cite{FH}, which is a crucial input for the main result.
Let $A$ be a finite nonempty connected subgraph of $\mathbb Z^d$.
Define the $\ell$-height of a vertex $z$ to be equal to $\ell\cdot z$. Introduce
\begin{align*}
 b_\ell(A)&:=\min_{\mathrm{lex}}
   \operatorname*{arg\,min}_{z\in\operatorname{Vert}(A)}\ell\cdot z,\\
 t_\ell(A)&:=\min_{\mathrm{lex}}
   \operatorname*{arg\,max}_{z\in\operatorname{Vert}(A)}\ell\cdot z,
\end{align*}
where $\min_{\mathrm{lex}}$ selects the smallest, for the lexicographic order, vertex among those attaining the minimal / maximal height, so that $b_\ell(A)$ and $t_\ell(A)$ are unique. 
Define
\[
 H_\ell(A):=\ell\cdot\bigl(t_\ell(A)-b_\ell(A)\bigr)=\max_{z\in\operatorname{Vert}(A)}\ell\cdot z -\min_{z\in\operatorname{Vert}(A)}\ell\cdot z.
\]
For an edge $a$, let $a^\circ$ be the corresponding open line segment in \(\bbR^d\). Define the edge sets
\begin{align*}
 U_h&:=\{a:a^\circ\cap\{z:\ell\cdot z\ge h\}\ne\varnothing\},\\
 L_h&:=\{a:a^\circ\subset\{z:\ell\cdot z<h\}\},\\
 \mathcal S_{a,b}&:=U_a\cap L_b.
\end{align*}
A vertex $z$ is a renewal point of $A$ if
\[
 \{v\in\operatorname{Vert}(A):\ell\cdot z-r<\ell\cdot v\le\ell\cdot z\}=\{z\}.
\]
A string is a finite connected graph $S$, of strictly positive height,
whose edges lie in $\mathcal S_{\ell\cdot b_\ell(S),\ell\cdot t_\ell(S)}$
and whose base and head are renewal points. An
\emph{irreducible string}, called a sausage in \cite{FH}, has no other
renewal point. All strings below have base $0$.
Let $O_\ell(S)$ be the event that every edge of $S$ is open and every other edge of its slab with at least one endpoint in $\operatorname{Vert}(S)$ is closed. For an interval $I\subset\mathbb R$, let $\operatorname{Str}_I$ be the union of these events over strings based at $0$ whose height belongs to $I$. Define
$$w_\ell(S):=\mathbb P_p(O_\ell(S)).$$
It was shown in \cite[Proposition~1.7]{FH} that
\begin{equation}
 \zeta_\ell=-\lim_{u\to\infty}\frac1u
             \log\mathbb P_p(\operatorname{Str}_{[u,u+r)})\in(0,\infty).
\end{equation}
By \cite[Lemma~2.7 and Proposition~2.9]{FH}, for every $S$ irreducible one has that
\begin{equation}
 \mu_\ell(S) :=e^{\zeta_\ell H_\ell(S)}w_\ell(S)
\end{equation}
is a probability measure on the set of irreducible strings.
Every string is uniquely representable as a concatenation of irreducible strings, and
its weight is the product of the weights of the irreducible strings that compose it.

Let $S_1,S_2,\ldots$ be independent with law $\mu_\ell$, and define 
$H_i=H_\ell(S_i)$, $V_i=v(S_i)$,
$T_N=\sum_{i\le N}H_i$, $W_N=\sum_{i\le N}V_i$.
With these notations, for any interval $I$ of finite length $L$,
\begin{equation}
\sum_{S:\,H_\ell(S)\in I}
e^{\zeta_\ell H_\ell(S)}w_\ell(S)
=
\mathbb E_{\mu_\ell}
\left[\sum_{N\geq1}\mathbf{1}_{\{T_N\in I\}}\right]
\leq 1+\frac{L}{r},
\end{equation}
where the sum on the left runs over all strings with base $0$.
It is immediate to see that
\begin{equation}\label{eq:coutfermeture}
 w_\ell(S)\le (1-p)^{-4d}\mathbb P_p(C_0=S).
\end{equation}
Note that there exist constants $\zeta_-,\zeta_+\in(0,\infty)$, depending only on $p,d$, such that for every $\ell\in\mathbb{S}^{d-1}$ one has \(0<\zeta_-\leq\zeta_\ell\leq\zeta_+<\infty\).
\\Indeed, for the lower bound, let $c_0>0$ be the constant in the second
estimate of \eqref{eq:sharpness}. Then, 
\[
\begin{aligned}
\mathbb P_p\bigl(\operatorname{Str}_{[u,u+r)}\bigr)
&\leq
(1-p)^{-4d}
\mathbb P_p\left(
|C_0|<\infty,\,
\max_{z\in C_0}\|z\|\geq u
\right)
\\
&\leq C(1-p)^{-4d}e^{-c_0u}.
\end{aligned}
\]
Therefore, one can choose \(\zeta_\ell\geq c_0=:\zeta_-.\)
The upper bound is trivial.
It is proven in \cite[Lemma~2.18]{FH} that the tail of the head is stretched-exponential, in the sense that
\begin{equation}
 \mu_\ell(H_1\ge u)\le C e^{-c\sqrt u}.
\end{equation}
Here $c,C$ can be chosen independently of $\ell$, as specified before
\cite[Lemma~2.18]{FH}.
\subsection{Proof of Theorem~\ref{thm:main}}
In this subsection, we are going to prove Theorem~\ref{thm:main}. We are firstly going to prove few technical lemmas. The proof of Theorem~\ref{thm:main} can be found at the very end of this subsection. Let us explain the strategy of the proof. Given the irreducible decomposition explained above, we reason as follows : firstly, we prove that the independent random variables \(V_i\) have stretched exponential tails under the tilted measure, this is directly proved below. We then derive some basic estimates on the large deviation of the sum of these variables. Finally, we combine all of these to prove the main estimate.
Recall that \(\beta:=\alpha/2=\frac{d-1}{2d}\).
By \eqref{eq:coutfermeture} and the first bound in \eqref{eq:sharpness},
\(\sum_{S:\,v(S)\ge v}w_\ell(S)\le C e^{-c_0v^\alpha}.\) Choosing $a>0$ small enough, one has
\begin{align}
 \mu_\ell(V_1\ge v)
 &\le\mu_\ell(H_1\ge av^\alpha)
       +e^{\zeta_+av^\alpha}\sum_{S:\,v(S)\ge v}w_\ell(S)\notag\\
 &\le C e^{-c v^\beta}+C e^{-c_0v^\alpha/2}
 \le C e^{-c v^\beta}.
\end{align}
In particular, there exists $\kappa>0$ such that
\begin{align}
 m:=\sup_\ell\mathbb E_{\mu_\ell}V_1&<\infty,\notag\\
 \sup_\ell\mathbb E_{\mu_\ell}[V_1^2e^{\kappa V_1^\beta}]&<\infty.
 \label{eq:moments}
\end{align}

\begin{lemma}\label{lem:ld}
For each fixed $A<\infty$, there are $K,C,c>0$, independent of $\ell$,
such that for $n\ge1$ and $1\le N\le An$,
\begin{align*}
 \mathbb P_{\mu_\ell}(W_N>Kn)&\le C e^{-cn^\beta},\\
 \mathbb E_{\mu_\ell}[W_N\mathbf1_{\ W_N>Kn}]&\le Cn e^{-cn^\beta}.
\end{align*}
\end{lemma}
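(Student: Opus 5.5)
Since $W_N$ is nondecreasing in $N$, it suffices to treat $N=\lfloor An\rfloor$. Then $W_N$ is a sum of at most $An$ i.i.d.\ nonnegative variables with uniformly bounded mean $m$ and a stretched-exponential moment $\sup_\ell\mathbb E_{\mu_\ell}[V_1^2e^{\kappa V_1^\beta}]<\infty$ from \eqref{eq:moments}. Because $\beta<1$, this is a large-deviation bound for heavy-ish tailed sums. We use the standard truncation argument, choosing $K=2Am+1$ or so, with all constants depending on $\ell$ only through the uniform bounds in \eqref{eq:moments}.

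\textbf{Step 1 (truncation).} Fix a level $M=\delta n$ with $\delta>0$ small, and split $V_i=V_i\mathbf 1_{V_i\le M}+V_i\mathbf 1_{V_i>M}$. The probability that some $V_i$ with $i\le N$ exceeds $M$ is at most $An\,\mu_\ell(V_1>\delta n)\le CAn e^{-c(\delta n)^\beta}\le Ce^{-c'n^\beta}$.

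\textbf{Step 2 (exponential Chebyshev on the truncated part).} Set $\lambda=\kappa M^{\beta-1}/2$ and $\bar V_i=V_i\mathbf 1_{V_i\le M}$. On $\{V\le M\}$ one has $\lambda V\le \tfrac{\kappa}{2}V^\beta$, because $V^{1-\beta}\le M^{1-\beta}$. Using $e^{x}\le 1+x+x^2e^{x}$ for $x\ge0$, this gives
\[
\mathbb E e^{\lambda\bar V_1}\le 1+\lambda m+\lambda^2\,\mathbb E[V_1^2e^{\kappa V_1^\beta/2}]\le \exp\{\lambda m+C\lambda^2\}.
\]
Hence
\[
\mathbb P\Bigl(\sum_{i\le N}\bar V_i>Kn\Bigr)\le \exp\{-\lambda Kn+An(\lambda m+C\lambda^2)\}.
\]
With $K>2Am$ and $n$ large, so that $\lambda$ is small, the exponent is at most $-\lambda n(K-Am)/2\le -c\,n\cdot n^{\beta-1}=-cn^\beta$. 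Combining with Step 1 gives the first bound, since on the complement of the Step 1 event $W_N=\sum_{i\le N}\bar V_i$. Small $n$ is absorbed into $C$.

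\textbf{Step 3 (the truncated expectation).} Write
\[
\mathbb E[W_N\mathbf 1_{W_N>Kn}]\le \sum_{i\le N}\mathbb E\bigl[V_i\mathbf 1_{V_i>M}\bigr]+\mathbb E\Bigl[\sum_{i}\bar V_i\,\mathbf 1_{W_N>Kn}\Bigr].
\]
The first sum is at most $An\,\mathbb E[V_1\mathbf 1_{V_1>\delta n}]\le Cn\,e^{-\kappa(\delta n)^\beta}\,\mathbb E[V_1e^{\kappa V_1^\beta}]$, which is bounded by $Cne^{-cn^\beta}$. For the second term we use Cauchy--Schwarz:
\[
\mathbb E\Bigl[\sum_i\bar V_i\,\mathbf 1_{W_N>Kn}\Bigr]\le \Bigl(\mathbb E\bigl(\textstyle\sum_i \bar V_i\bigr)^2\Bigr)^{1/2}\,\mathbb P(W_N>Kn)^{1/2}.
\]
The second moment is at most $N\,\mathbb E V_1^2+N^2m^2\le Cn^2$, so this term is at most $Cn e^{-cn^\beta/2}$.

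The main point requiring care is Step 2: the choice $\lambda\asymp n^{\beta-1}$ must be compatible with the truncation so that the moment generating function is controlled by the uniform moment \eqref{eq:moments}. The resulting exponent $\lambda n\asymp n^\beta$ is exactly the stretched-exponential rate claimed. Uniformity in $\ell$ follows because only $m$, $\kappa$ and the supremum in \eqref{eq:moments}, all $\ell$-independent, enter the constants.
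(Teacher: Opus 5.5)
Your proof is correct and follows the paper's argument: truncate the $V_i$ at scale $n$, apply an exponential Chebyshev bound with $\lambda\asymp n^{\beta-1}$ controlled by the uniform moment bound \eqref{eq:moments}, and use Cauchy--Schwarz with an $O(n^2)$ second moment for the truncated expectation. The differences are cosmetic: you truncate at $\delta n$ rather than $n$, and you separate the large increments before applying Cauchy--Schwarz, whereas the paper applies it directly to $W_N$.
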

\begin{proof}
Take $U_i:=V_i\mathbf1_{\{V_i\le n\}}$ and
$\lambda:=\kappa n^{\beta-1}$.
By union bound and Markov inequality, for $K$ big enough one has
\begin{align*}
 \mathbb P_{\mu_\ell}(W_N>Kn)
 &\le N\mathbb P_{\mu_\ell}(V_1>n)
   +\exp\{-\lambda Kn+N\lambda m+CN\lambda^2\}\\
 &\le C e^{-c'n^\beta},
\end{align*}
Since $\mathbb E_{\mu_\ell} W_N^2\le C N^2$, the second bound follows immediately.
\end{proof}

For a finite connected graph $A$ with $b_\ell(A)=0$, let $z=t_\ell(A)$.
Add the two edges\(\{-e,0\}\) and \(\{z,z+e\} \) and translate the resulting graph by $e$. Denote it by $S(A)$. Clearly, this new graph is a string with $H_\ell(S(A))>H_\ell(A)$ and $v(S(A))> v(A)$.
Moreover, deleting the two added vertices recovers $A+e$, so this map is injective. Therefore
\begin{equation}
 w_\ell(S(A))\ge p^2(1-p)^{4d}\mathbb P_p(C_0=A).
\end{equation}
Now, remark that for any nonnegative translation-invariant function $F$ of finite graphs, it is easy to see that \(\mathbb E_p[F(C_0)\mathbf1_{\{|C_0|<\infty\}}]=\sum_{A:\,b_\ell(A)=0}v(A)\mathbb P_p(C_0=A)F(A).\) Therefore, for $u,v\ge0$,
\begin{align}
 \mathbb P_p\bigl(H_\ell(C_0)\ge u,\ v<|C_0|<\infty\bigr)\notag &\le C\sum_S v(S)w_\ell(S) \mathbf1_{\{H_\ell(S)\ge u,\ v(S)>v\}}\notag\\
 &\le C\sum_{N\ge1}\mathbb E_{\mu_\ell}\!\left[e^{-\zeta_\ell T_N}W_N\mathbf1_{\{T_N\ge u,\ W_N>v\}}\right],
 \label{eq:master}
\end{align}
\begin{lemma}
There exist $K,C,c>0$, depending only on $p,d$, such that,
for every $\ell\in\mathbb S^{d-1}$, $n\ge1$ and $0\le u\le n$,
\[
 \mathbb P_p\bigl(H_\ell(C_0)\ge u,\ Kn<|C_0|<\infty\bigr)
 \le C e^{-\zeta_\ell u-cn^\beta}.
\]
\end{lemma}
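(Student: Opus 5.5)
Start from the master inequality \eqref{eq:master} with $v=Kn$:
\[
\mathbb P_p\bigl(H_\ell(C_0)\ge u,\ Kn<|C_0|<\infty\bigr)\le C\sum_{N\ge1}\mathbb E_{\mu_\ell}\!\left[e^{-\zeta_\ell T_N}W_N\mathbf1_{\{T_N\ge u,\ W_N>Kn\}}\right].
\]
The plan is to split the sum over $N$ and the event according to whether $T_N$ is large or small compared to $n$. The first step is to decompose according to $T_N\in[j,j+1)$ for integers $j\ge \lfloor u\rfloor$. On that event $e^{-\zeta_\ell T_N}\le e^{-\zeta_\ell j}$. This leaves us to bound
\[
\Sigma_j:=\sum_{N\ge1}\mathbb E_{\mu_\ell}\bigl[W_N\mathbf1_{\{T_N\in[j,j+1),\,W_N>Kn\}}\bigr].
\]
Since $H_i\ge r\ge d^{-1/2}$ (a string has positive height and consecutive renewal points are separated by at least $r$), $T_N\ge rN$. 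Hence only $N\le (j+1)/r$ contribute.

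For the range $j\le 2n$ (say), all contributing $N$ satisfy $N\le An$ with $A=3/r\le 3\sqrt d$, independent of $\ell$. The second bound of Lemma~\ref{lem:ld} then gives $\Sigma_j\le (An)\cdot Cne^{-cn^\beta}$. Summing over $j\in[u,2n]$ with weights $e^{-\zeta_\ell j}$ produces $Cn^2e^{-\zeta_\ell u-cn^\beta}$. The polynomial prefactor is absorbed by reducing $c$.

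For $j>2n$, we ignore the volume constraint. We use the renewal bound: $\sum_N\mathbb E[W_N\mathbf 1_{T_N\in[j,j+1)}]$ is controlled by a Cauchy--Schwarz/second moment argument. Concretely, $\mathbb E[W_N\mathbf1_{T_N\in I}]\le (\mathbb E W_N^2)^{1/2}\mu_\ell(T_N\in I)^{1/2}\le CN$, so $\Sigma_j\le Cj^2$. Then
\[
\sum_{j>2n}e^{-\zeta_\ell j}Cj^2\le Ce^{-\zeta_\ell u}\sum_{j>2n}j^2e^{-\zeta_\ell(j-u)}\le Ce^{-\zeta_\ell u}e^{-\zeta_- n/2},
\]
using $u\le n$ and $\zeta_\ell\ge\zeta_-$. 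Since $n\ge n^\beta$, this is of the required form.

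The key point is that $K$ is chosen from Lemma~\ref{lem:ld} with $A=3\sqrt d$. This choice is uniform in $\ell$ thanks to the uniform moment bounds \eqref{eq:moments}.

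The main obstacle I expect is the uniformity and the regime bookkeeping. Lemma~\ref{lem:ld} needs $N\le An$, which is exactly why large heights $T_N\gg n$ must be handled separately. There we must make sure the exponential factor $e^{-\zeta_\ell(T_N-u)}$ with $T_N-u\ge n$ beats the polynomial growth of $\mathbb E W_N$. A secondary subtlety is the bound $H_i\ge r$, which gives $N\lesssim T_N$. If that fails for irreducible strings, I would instead use the first-moment renewal bound $\mathbb E_{\mu_\ell}[\sum_N\mathbf1_{T_N\in I}]\le 1+L/r$. I would combine it with a crude truncation $N\le An$ via a large deviation bound for $T_N$ being small, which follows from $H_i>0$ having a nondegenerate law.
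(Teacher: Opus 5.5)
Your proposal is correct and follows the paper's proof. It uses the same split into $T_N\le 2n$ and $T_N>2n$: in the first regime $H_i\ge r$ forces $N\le An$, so the second bound of Lemma~\ref{lem:ld} applies, and in the second regime the exponential decay in $T_N-u\ge n$ beats the polynomial growth of $\mathbb E_{\mu_\ell}W_N$. The only cosmetic difference is in the regime $T_N>2n$: the paper writes $\zeta_\ell T_N\ge\zeta_\ell u+\zeta_\ell n/2+\zeta_\ell T_N/4$ and sums $\mathbb E_{\mu_\ell}[W_Ne^{-\zeta_\ell T_N/4}]$ in closed form, whereas you slice $T_N$ into unit intervals and use $\mathbb E_{\mu_\ell}W_N\le CN$ with $N\le (j+1)/r$. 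Your fallback is unnecessary, since $H_i\ge r$ holds deterministically: otherwise the base, at height $0$, would lie in the renewal window $(H_i-r,H_i]$ of the head.
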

Take $n\ge1$, $0\le u\le n$, and let $A=2\sqrt d$.
Choose $K$ as in Lemma~\ref{lem:ld}. In \eqref{eq:master} with $v=Kn$,
first restrict to $T_N\le2n$. Since $H_i\ge r\ge d^{-1/2}$, this forces
$N\le An$, and therefore
\begin{align}
\sum_{N\ge1}\mathbb E_{\mu_\ell}\!\left[
 e^{-\zeta_\ell T_N}W_N \mathbf1_{u\le T_N\le2n,\ W_N>Kn}\right]\notag &\le e^{-\zeta_\ell u}\sum_{N\le An}\mathbb E_{\mu_\ell}[W_N\mathbf1_{\ W_N>Kn}]\notag\\
       &\le C e^{-\zeta_\ell u-cn^\beta}.
 \label{eq:moderate-height}
\end{align}

On $T_N>2n$, one has that $u\le n$ so
\[
 \zeta_\ell T_N\ge
 \zeta_\ell u+\frac{\zeta_\ell n}{2}+\frac{\zeta_\ell T_N}{4}.
\]

By \eqref{eq:moments} and the independence of the heights, we obtain
\[
 \sum_{N\ge1}\mathbb E_{\mu_\ell}
           [W_Ne^{-\zeta_\ell T_N/4}]
 =\frac{\mathbb E_{\mu_\ell}[V_1e^{-\zeta_\ell H_1/4}]}
        {(1-\mathbb E_{\mu_\ell}e^{-\zeta_\ell H_1/4})^2}
 \le C.
\]
Thus the contribution of $T_N>2n$ is at most
$C e^{-\zeta_\ell u-\zeta_-n/2}$. Together with
\eqref{eq:master} and \eqref{eq:moderate-height}, this proves that, for $0\le u\le n$ , one has
\begin{equation}
 \mathbb P_p(H_\ell(C_0)\ge u,\ Kn<|C_0|<\infty)
 \le C e^{-\zeta_\ell u-cn^\beta},\label{eq:span-volume}
\end{equation}
Finally, we are going to need the following technical result for the proof of Theorems~\ref{thm:main} and~\ref{theorem:OZ}. Remember
\[
 \mathsf K_p=\{t\in\mathbb R^d:t\cdot y\le\xi_p(y)
                              \text{ for all }y\in\mathbb R^d\}.
\]
\begin{lemma}\label{lem:point-plane-duality}
For every $\ell\in\mathbb S^{d-1}$, $\zeta_\ell\ell\in\partial\mathsf K_p$.
Consequently, every $t\in\partial\mathsf K_p$ satisfies
$t=\zeta_\ell\ell$ with $\ell=t/\|t\|$, and
\[
 \sum_{y\in\mathbb Z^d}e^{t\cdot y}G_p(y)=\infty.
\]
\end{lemma}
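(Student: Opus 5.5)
We will show that $\zeta_\ell\ell\in\mathsf K_p$ and that it cannot lie in the interior. This is done in three steps: an upper bound on $\xi_p$ through planes, a reverse bound through direct constructions, and then the polar-body bookkeeping and the divergence of the exponential sum.

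\emph{Step 1: $\zeta_\ell\ell\in\mathsf K_p$.} We must show $\zeta_\ell\,\ell\cdot y\le\xi_p(y)$ for every $y$. When $\ell\cdot y\le0$ this is trivial. When $\ell\cdot y>0$, the event $\{0\leftrightarrow ky,\,|C_0|<\infty\}$ forces $H_\ell(C_0)\ge k\,\ell\cdot y$. By the translation-invariance identity preceding \eqref{eq:master} and the string map $A\mapsto S(A)$ with its weight comparison,
\[
\mathbb P_p\bigl(H_\ell(C_0)\ge u,\,|C_0|<\infty\bigr)\le C\sum_{S:\,H_\ell(S)\ge u}v(S)w_\ell(S).
\]
The right-hand side is bounded by $C e^{-\zeta_\ell u}\sum_N\mathbb E_{\mu_\ell}[W_N e^{-\zeta_\ell(T_N-u)}\mathbf 1_{T_N\ge u}]$. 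Splitting $T_N\in[u+j,u+j+1)$, each piece is controlled by the renewal bound $1+L/r$ together with the moment bounds \eqref{eq:moments}, which control $W_N$ polynomially. This gives at most $\mathrm{poly}(u)e^{-\zeta_\ell u}$. Taking logarithms, dividing by $k$ and letting $k\to\infty$ yields $\xi_p(y)\ge\zeta_\ell\,\ell\cdot y$. (Alternatively one can simply take $u\le n$, $K$ large in \eqref{eq:span-volume}, and handle small volumes separately.)

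\emph{Step 2: $\zeta_\ell\ell\in\partial\mathsf K_p$.} It suffices to find $y\ne0$ with $\xi_p(y)\le\zeta_\ell\,\ell\cdot y$. The strings of height in $[u,u+r)$ have total probability $e^{-\zeta_\ell u+o(u)}$. Each such string $S$ connects its base $0$ to its head $t_\ell(S)$ by a finite cluster, up to the closing cost \eqref{eq:coutfermeture}; indeed $O_\ell(S)$ together with closing the boundary edges outside the slab makes $C_0=S$ at cost at most $(1-p)^{-4d}$ in the reverse direction. By sharpness \eqref{eq:sharpness} applied to strings (their weight is bounded by that of finite clusters), the heads satisfy $\|t_\ell(S)\|\le Cu$ except for a contribution $e^{-c_0'u}$ with $c_0'$ large, which is negligible once the constant $C$ is chosen large. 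The number of possible heads is polynomial in $u$, so some $z_u$ with $\|z_u\|\le Cu$ and $\ell\cdot z_u\in[u,u+r)$ satisfies $G_p(z_u)\ge e^{-\zeta_\ell u-o(u)}$. Combining this with \eqref{eq:pointwise} gives $\xi_p(z_u)\le\zeta_\ell u+o(u)$. Passing to a subsequence, $z_u/u\to y$ with $\ell\cdot y=1$, and continuity of the norm $\xi_p$ gives $\xi_p(y)\le\zeta_\ell=\zeta_\ell\,\ell\cdot y$. Together with Step 1, $\zeta_\ell\ell$ attains equality on the supporting hyperplane, so it lies on $\partial\mathsf K_p$. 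I expect the main obstacle to be this step: making sure the sharpness tail beats $\zeta_\ell u$. If the constants do not cooperate, I would instead use a cruder bound, namely that the number of vertices of $S$ is at most $Ku$ outside an event of probability $e^{-\zeta_\ell u-cu^\beta}$ (the Lemma above \eqref{eq:span-volume}). Since $S$ is connected, this gives $\|t_\ell(S)\|\le Ku$ directly, which is cleaner.

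\emph{Step 3: consequences.} $\mathsf K_p$ is a compact convex body containing $0$ in its interior, so each ray from $0$ meets $\partial\mathsf K_p$ exactly once. Hence every $t\in\partial\mathsf K_p$ equals $\zeta_\ell\ell$ with $\ell=t/\|t\|$. For the divergence, restrict the sum to $y$ ranging over heads of strings with height in $[u,u+r)$. Using $e^{t\cdot y}\ge e^{\zeta_\ell u}$ there, and $G_p(y)\ge c\,\mathbb P_p(O_\ell(S))$ summed over strings with head $y$, we get
\[
\sum_{y}e^{t\cdot y}G_p(y)\ge c\,e^{\zeta_\ell u}\,\mathbb P_p(\operatorname{Str}_{[u,u+r)})\,/\,\mathrm{mult}(u).
\]
Here $\mathrm{mult}(u)$ is the overcounting factor. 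Since $G_p(y)=\sum_A\mathbb P(C_0=A)$ over clusters containing $y$, distinct strings with the same head are disjoint events, so $\mathrm{mult}=1$. The right-hand side equals $c\,\mu_\ell(T_N\in[u,u+r)\text{ for some }N)$, which is bounded below by the renewal theorem (its average over $u$ is $\ge c>0$). Summing over disjoint windows $u\in r\mathbb Z_{\ge0}$ (which pick out disjoint sets of $y$) then gives infinitely many contributions bounded below, so the series diverges.
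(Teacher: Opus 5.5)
Your proof is correct, and Steps 1 and 3 are essentially the paper's argument. In Step 1 the paper also feeds \eqref{eq:master} with $v=0$ into the $\mu_\ell$-renewal representation. It uses the slack $e^{-\zeta_\ell T_N}\le e^{-(\zeta_\ell-\varepsilon)u}e^{-\varepsilon T_N}$ and the closed-form generating function, where you split into windows and accept a polynomial prefactor; the difference is cosmetic. For divergence the paper uses $\sum_y e^{\zeta_\ell\ell\cdot y}G_p(y)\ge(1-p)^{4d}\sum_S e^{\zeta_\ell H_\ell(S)}w_\ell(S)=(1-p)^{4d}\sum_{N\ge1}1$, which is your Step 3.

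You genuinely differ on boundary membership. The paper gets it for free from the divergence: by \eqref{eq:pointwise}, $\sum_y e^{t\cdot y}G_p(y)<\infty$ at every interior point of $\mathsf K_p$, so divergence at $\zeta_\ell\ell$ excludes the interior, and a single computation proves both claims. Your Step 2 instead builds a dual direction $y$ with $\xi_p(y)=\zeta_\ell\,\ell\cdot y$: you localize string heads, pigeonhole, apply \eqref{eq:pointwise}, and pass to a limit by compactness. This is valid, and the obstacle you feared is not real: the tail in \eqref{eq:sharpness} at radius $C_1u$ is $e^{-cC_1u}$, which beats $e^{-\zeta_+u}$ once $C_1$ is large. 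It also gives extra geometric information, namely that the point-to-plane rate is attained by point-to-point connections along a limiting direction of string heads. But it costs the radius tail, the limit characterization of $\zeta_\ell$ and a compactness step, and it is redundant once Step 3 is in place.

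There is one slip in Step 3. The quantity $e^{\zeta_\ell u}\mathbb P_p(\operatorname{Str}_{[u,u+r)})$ is not equal to $\mu_\ell(\exists N:\,T_N\in[u,u+r))$. By the union bound and $H_\ell(S)\ge u$ it is at most that quantity, so your lower-bound chain breaks at that step as written. The fix is immediate: keep the sum over strings and the exact exponent $t\cdot y=\zeta_\ell H_\ell(S)$ at the head $y$, which gives
$\sum_{\ell\cdot y\in[u,u+r)}e^{t\cdot y}G_p(y)\ge(1-p)^{4d}\sum_{S:\,H_\ell(S)\in[u,u+r)}e^{\zeta_\ell H_\ell(S)}w_\ell(S)$.
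Summing over the disjoint windows then yields $(1-p)^{4d}\,\mathbb E_{\mu_\ell}[\#\{N\ge1\}]=\infty$ directly, with no renewal theorem needed.
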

\begin{proof}
Fix $\ell$ and $0<\varepsilon<\zeta_\ell$. By \eqref{eq:master} with $v=0$,
\[
 \mathbb P_p(H_\ell(C_0)\ge u,\ |C_0|<\infty)
 \le Ce^{-(\zeta_\ell-\varepsilon)u}
      \sum_{N\ge1}\mathbb E_{\mu_\ell}[W_Ne^{-\varepsilon T_N}].
\]
Independence and \eqref{eq:moments} give
\[
 \sum_{N\ge1}\mathbb E_{\mu_\ell}[W_Ne^{-\varepsilon T_N}]
 =\frac{\mathbb E_{\mu_\ell}[V_1e^{-\varepsilon H_1}]}
        {(1-\mathbb E_{\mu_\ell}[e^{-\varepsilon H_1}])^2}<\infty,
\]
since $H_1\ge r>0$. Applying this bound to $G_p(kz)$ for
$\ell\cdot z>0$, then letting $k\to\infty$ and $\varepsilon\downarrow0$,
yields $\xi_p(z)\ge\zeta_\ell\ell\cdot z$.
For $\ell\cdot z\le0$ this is immediate. Homogeneity and continuity therefore give $$\zeta_\ell\ell\in\mathsf K_p.$$
On the other hand, \eqref{eq:coutfermeture} and the renewal decomposition imply
\[
 \sum_y e^{\zeta_\ell\ell\cdot y}G_p(y)
 \ge(1-p)^{4d}\sum_{S\text{ string}}
       e^{\zeta_\ell H_\ell(S)}w_\ell(S)
 =(1-p)^{4d}\sum_{N\ge1}1=\infty.
\]
By \eqref{eq:pointwise}, this series is finite in the interior of
$\mathsf K_p$, so $\zeta_\ell\ell\in\partial\mathsf K_p$.
Each ray from $0$ meets this boundary exactly once, proving the remaining claims.
\end{proof}

\begin{proof}[Proof of Theorem~\ref{thm:main}]
Choose $t\in\partial\mathsf K_p$ with $t\cdot x=\xi_p(x)$ and set
$\ell=t/\|t\|$. By Lemma~\ref{lem:point-plane-duality},
$\zeta_\ell\ell=t$. On $\{0\leftrightarrow x\}$,
$H_\ell(C_0)\ge\ell\cdot x$, with $0<\ell\cdot x\le\norm{x}$.
Thus \eqref{eq:span-volume}, with $n=\norm{x}$ and $u=\ell\cdot x$,
gives \eqref{eq:main}. 
\end{proof}

\section{Extension of the Ornstein-Zernike theory}\label{section:OZ}
In this section, we are going to prove Theorem~\ref{theorem:OZ}. We are going to use and adapt the modern Ornstein-Zernike theory as developed in~\cite{CI02,AOV24, CIV08}.
\subsection{Coarse-graining}
\label{sec:skeleton}
\subsubsection{Skeleton}
First we construct the skeleton, same as in \cite[Section~2.2]{CIV08}, but boxes have different shape in order to minimize their intersection with the cluster of $0$. This is a crucial detail that will allow us to extract a cost of disconnecting these boxes from the cluster of $0$, which is exactly the point of Lemma~\ref{lem:deco}. Define
\[
 \mathbb E^d=\{\{u,v\}:u,v\in\mathbb Z^d,\ \|u-v\|_1=1\},
\]
Define
\[
\mathsf B_R(z)=(z+R\mathsf U_p)\cap\mathbb Z^d.
\]
Let $n=\norm{x}$, and $E_x(M):=\{x\in C_0,\ |C_0|\le Mn\}$, with $M$ big enough so that Theorem \ref{thm:main} applies.
Then
\begin{equation}
 0\le G_p(x)-\Pp(E_x(M))
 \le Ce^{-\xi_p(x)-cn^\beta}
\end{equation}
Finally, fix $ c_0:=\min_{\|y\|=1}\xi_p(y)>0,$ and
\[
 a=\max\{1,\xi_p(e_1),\ldots,\xi_p(e_d)\},
\]
For a vertex set $D\subset\mathbb Z^d$, define
\[
 \partial^{\mathrm{ext}}D
 =\{y\in\mathbb Z^d\setminus D:\
       \{y,z\}\in\mathbb E^d\text{ for some }z\in D\}.
\]
\paragraph{Algorithm of the coarse graining}
Fix $\eta>0$ small and an integer $K$ big.
Let $C$ be a finite cluster containing $0,x$.
For $z\in C$, set
\[
 b_C(z,r)=\#\{e\in\mathbb E^d:e\cap C\ne\varnothing,\
                                      |e\cap \mathsf B_r(z)|=1\}.
\]
This counts all crossing edges incident to $C$, including closed edges.
Fix the lexicographic order on vertices.
\begin{enumerate}
\item Set $x_0=0$ and $\mathcal C_{-1}=\varnothing$.
\item Given $x_i$, set
\[
 r_i=\min \operatorname*{arg\,min}_{r\in\{K,\ldots,K+\lfloor\eta K\rfloor\}}
                         b_C(x_i,r)\]
 Then set $A_i=\mathsf B_{r_i}(x_i)\setminus\mathcal C_{i-1}$, and $\mathcal C_i=\mathcal C_{i-1}\cup \mathsf B_{r_i}(x_i)$.

\item If possible, choose
\[
 x_{i+1}=\min\{y\in C\cap\partial^{\mathrm{ext}}\mathcal C_i:
 y\stackrel{C\setminus\mathcal C_i}{\longleftrightarrow}
                         \partial^{\mathrm{ext}}\mathsf B_K(y)\}.
\]
Return to step~2. Otherwise stop.
\end{enumerate}
For $i\ge1$, join $x_i$ to $x_{j(i)}$, where
\[
 j(i)=\min\{j<i:x_i\in\partial^{\mathrm{ext}}\mathsf B_{r_j}(x_j)\}.
\]
These big edges form the skeleton tree $\mathfrak T$, for which $m=N(\mathfrak T)+1$ is its number of vertices.
We also keep track of the chosen radius $r_i$ at each vertex in $\mathfrak T$.

\paragraph{The energy estimate.}
For $A\subset D$ finite, define
$b_D(A):=\#\{\{u,v\}\in\mathbb E^d:u\in A,\ v\notin D\}$.
Let $\mathbb P_p^D$ be independent Bernoulli bond percolation with parameter $p$ on the induced edge set $\mathbb E(D)=\{\{u,v\}\in\mathbb E^d:u,v\in D\}$, and let $\mathbb E_p^D$ be its expectation. For $z\in D$, $C_z^D$ is the open cluster of $z$ in this induced graph.
\begin{lemma}\label{lem:deco}
For $z\in D$ and every nonnegative function $f$ of $C_z^D$,
\begin{equation}\label{eq:extraction_locale}
 \mathbb E_p^D[(1-p)^{b_D(C_z^D)}f(C_z^D)]
 =\mathbb E_p[f(C_z)\,\mathbf1_{\{C_z\subset D\}}].
\end{equation}
In particular,
\begin{equation}\label{eq:local-energy}
 \mathbb E_p^D[(1-p)^{b_D(C_z^D)}\,\mathbf1_{\{
             \max_{y\in C_z^D}\xi_p(y-z)\ge K-a\}}]
 \le cK^de^{-K}.
\end{equation}
\end{lemma}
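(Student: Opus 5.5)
The identity \eqref{eq:extraction_locale} follows from a cluster-by-cluster comparison of the two measures, and \eqref{eq:local-energy} then reduces to a sum of finite-cluster connection probabilities. For the identity, fix a finite connected vertex set $A\subset D$ containing $z$, with $A$ identified with its open graph $\Gamma$. Under $\mathbb P_p^D$, the event $\{C_z^D=\Gamma\}$ means three things: the edges of $\Gamma$ are open, the edges of $\mathbb E(A)\setminus E(\Gamma)$ are closed, and the edges $\{u,v\}$ with $u\in A$ and $v\in D\setminus A$ are closed. Under $\mathbb P_p$, the event $\{C_z=\Gamma\}$ requires the same conditions, plus closedness of the edges from $A$ to $\mathbb Z^d\setminus D$. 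There are exactly $b_D(A)$ such extra edges, and they lie outside $\mathbb E(D)$.

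Since the edges are independent, this gives
\[
\mathbb P_p(C_z=\Gamma)=(1-p)^{b_D(A)}\,\mathbb P_p^D(C_z^D=\Gamma).
\]
Multiplying by $f(\Gamma)$ and summing over all $\Gamma$ with vertex set in $D$ yields \eqref{eq:extraction_locale}. Here I use that $\{C_z\subset D\}$ forces $C_z$ to be finite when $D$ is finite. If $D$ is infinite, clusters $C_z^D$ that are infinite contribute on both sides only on infinite events, so I restrict $f$ to finite clusters, which is all that is used later.

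For \eqref{eq:local-energy}, apply the identity with $f(C)=\mathbf 1\{\max_{y\in C}\xi_p(y-z)\ge K-a\}$. The left side becomes
\[
\mathbb P_p\bigl(C_z\subset D,\ \exists y\in C_z:\ \xi_p(y-z)\ge K-a\bigr)
\le \mathbb P_p\bigl(|C_z|<\infty,\ \exists y\in C_z:\ \xi_p(y-z)\ge K-a\bigr).
\]
If the cluster reaches a point $y$ with $\xi_p(y-z)\ge K-a$, then by connectedness and nearest-neighbour steps it also contains a point $y'$ with $K-a\le\xi_p(y'-z)< K$. This uses that a single step changes $\xi_p$ by at most $\max_i\xi_p(e_i)\le a$. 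Hence, by a union bound, translation invariance and \eqref{eq:pointwise}, the left side is at most
\[
\sum_{w:\ K-a\le\xi_p(w)<K} G_p(w)\le \#\{w:\xi_p(w)<K\}\,e^{-(K-a)}\le C K^d e^{a}e^{-K}.
\]
The count of $w$ uses $\xi_p\ge c_0\|\cdot\|$, so $\{\xi_p<K\}$ lies in a Euclidean ball of radius $K/c_0$. The factor $e^a$ is absorbed into the constant $c$.

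The only delicate point is this shell argument. It is what makes the bound come out exactly as $e^{-K}$ with a polynomial prefactor and no loss in the exponent, rather than a sum over all far points. It rests on having the sharp pointwise bound \eqref{eq:pointwise} with constant $1$.
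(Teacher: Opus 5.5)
Your proof is correct and is essentially the paper's. The identity rests on the same observation, that $C_z=C_z^D$ exactly when the $b_D(C_z^D)$ edges leaving $D$ are closed; the paper phrases this by conditioning on the internal edges, which also covers infinite $D$, though only finite $D=A_i$ is used. The energy bound is the same union bound combined with \eqref{eq:pointwise}. Your restriction to a single $\xi_p$-shell is valid but not needed, since the paper's sum of $e^{-\xi_p(y)}$ over all $y$ with $\xi_p(y)\ge K-a$ is already $O(K^{d-1}e^{-K})$ (shells of $\xi_p$-radius $k$ contain $O(k^{d-1})$ lattice points), so it is not the delicate point you suggest.
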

\begin{proof}
Given all internal edges,  $C_z^D$ is in fact exactly the entire cluster if and only if its $b_D(C_z^D)$ external edges that have one endpoint in $D^c$ are closed, which openness is independent of the internal cluster. This proves \eqref{eq:extraction_locale}.
Applying this to the indicator function leads to 
\[
 \sum_{\xi_p(y-z)\ge K-a}G_p(y-z)
 \le\sum_{\xi_p(y)\ge K-a}e^{-\xi_p(y)}
 \le cK^de^{-K},
\]
\end{proof}
Note that with the notation of the coarse-graining algorithm,
\begin{equation}\label{eq:boundary-cost}
 \sum_i b_{A_i}(C_{x_i}^{A_i})\le c|C|/(\eta K).
\end{equation}
Indeed, an edge crosses $\partial \mathsf B_r(x_i)$ for a bounded number
of integer values of $r$.Therefore, since $r_i$ is chosen in order to minimize $b$, there exists $c>0$ such that
\[
 b_C(x_i,r_i)\le
 \frac{c}{\eta K}|C\cap \mathsf B_{2K}(x_i)|.
\]
So we get \(\sum_i b_{A_i}(C_{x_i}^{A_i})\le c\sum_i b_C(x_i,r_i)\le c|C|/(\eta K).\)
Fix now a skeleton $\mathfrak T$.
Define $ B_i=b_{A_i}(C_{x_i}^{A_i}),$ and the event
\[
 F_i=\left\{
 \max_{y\in C_{x_i}^{A_i}}\xi_p(y-x_i)\ge K-a
 \right\}.
\]
For $n$ sufficiently large, on $\{E_x(M),\mathfrak T(C_0)=\mathfrak T\}$, all events $F_i$ occur,
and by \eqref{eq:boundary-cost}
\[
 \prod_i(1-p)^{B_i}\ge
 \exp\!\left\{-\frac{cMn}{\eta K}\right\}.
\]
Dropping the skeleton constraint after this bound, \eqref{eq:local-energy}
and independence of the edges inside the fixed, disjoint regions $A_i$ give
\begin{align*}
 \mathbb P_p(E_x(M),\mathfrak T(C_0)=\mathfrak T)&\le e^{cMn/(\eta K)}
 \mathbb E_p\!\left[\prod_i(1-p)^{B_i}\,\mathbf1_{\bigcap_i F_i}\right]\\
 &=e^{cMn/(\eta K)}\prod_i\mathbb E_p^{A_i}\!\left[(1-p)^{B_i}\,\mathbf1_{F_i}\right]\\
 &\le e^{cMn/(\eta K)}(cK^de^{-K})^m\\
 &\le\exp\{-Km+cm\log K+cMn/(\eta K)\},
\end{align*}
\paragraph{The entropy estimate.}
\begin{lemma}
\label{lem:entropy}
There are at most $\exp(cm\log K)$ skeletons with $m$ vertices.
\end{lemma}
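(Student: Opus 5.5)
The plan is to encode a skeleton as a rooted tree with $m$ vertices, in which every non-root vertex $x_i$ carries a displacement $x_i-x_{j(i)}$ relative to its parent together with a radius $r_i\in\{K,\ldots,K+\lfloor\eta K\rfloor\}$. I would then count each of these three pieces of data separately. The skeleton $\mathfrak T$, as recorded, consists of the points $x_0=0,x_1,\dots,x_{m-1}$ (in the order produced by the algorithm), the tree edges $\{x_i,x_{j(i)}\}$, and the radii $r_i$. Since $x_0=0$ is fixed and the tree structure together with the local displacements determines all positions, it suffices to bound the number of admissible choices of (tree shape, displacements, radii).

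The key geometric input is that each displacement lies in a set of size polynomial in $K$. By construction $x_i\in\partial^{\mathrm{ext}}\mathsf B_{r_{j(i)}}(x_{j(i)})$, and $\mathsf B_r(z)=(z+r\mathsf U_p)\cap\bbZ^d$. Because $\xi_p$ is a norm equivalent to the Euclidean one ($c_0\|y\|\le\xi_p(y)\le a\|y\|_1$), any $y\in\partial^{\mathrm{ext}}\mathsf B_r(z)$ satisfies $\xi_p(y-z)\le r+a\le (1+\eta)K+a\le 2K+a$, so $\|y-z\|\le (2K+a)/c_0$. Hence for a given parent there are at most $cK^d$ possible positions of the child. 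There are also at most $\eta K+1\le K$ choices of each radius.

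For the combinatorial shape, I would bound the number of rooted (plane or labeled-by-order) trees on $m$ vertices by $4^m$, or, more crudely, note that $j(i)<i$ gives at most $i$ choices of parent. The latter yields only $m!$, which is too large, so the tree-shape count is the one delicate point. I would avoid it by encoding the tree through its exploration order: the sequence $x_1,x_2,\ldots$ is generated by the deterministic algorithm, and each $x_{i}$ is adjacent (in the above sense) to some earlier $x_j$. Using the standard depth-first contour encoding of a rooted tree, one gets at most $4^m$ shapes. Alternatively, one can use the fact that each vertex has boundedly many neighbours in $\mathfrak T$ up to a factor $cK^d$: the balls $\mathsf B_K(x_i)$ are essentially disjoint, since $x_{i+1}\notin\mathcal C_i$, so a given $x_j$ has at most $cK^{d}$ children, and the count becomes at most $\prod_i (\text{children choices})\le (cK^d)^m$ via the displacement encoding directly.

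Multiplying the three factors gives at most $4^m\,(cK^d)^m\,K^m\le \exp(cm\log K)$ for $K$ large. The main obstacle I expect is precisely the tree-shape step: ensuring that the parent assignment $j(i)$ does not introduce a factorial factor. I would first try the plane-tree encoding, ordering the children of each vertex by their index, which costs only $4^m$ and is independent of $K$.
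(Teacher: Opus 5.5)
Your proposal is correct and follows the paper's route. The paper simply cites the tree count of \cite[Lemma~4.1]{V}, which is exactly your bound of at most $4^m$ plane-tree shapes times $(cK^d)^m$ displacements, and then multiplies by $(1+\lfloor\eta K\rfloor)^m$ choices of radii. One point is worth making explicit, since you record the algorithm's order in the skeleton: a plane tree does not encode how indices interleave across different branches, but this costs nothing, because the order can be recovered from the vertex set and the radii. Indeed, $x_{i+1}$ is the lexicographically smallest remaining skeleton vertex in $\partial^{\mathrm{ext}}\mathcal C_i$, since any later skeleton vertex lying there was already eligible at step $i$ (the connection condition only becomes harder to satisfy as $\mathcal C_i$ grows).
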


\begin{proof}
Use the tree count of \cite[Lemma~4.1]{V} and multiply by at most
$(1+\lfloor\eta K\rfloor)^m$ choices of radii, still absorbed in
$\exp(cm\log K)$.
\end{proof}
\paragraph{Trunk and branches.}
Let $x_F$ be the first vertex $x_i$ of the skeleton such that
$x\in\mathsf B_{3K}(x_i)$. Such a vertex exists by the stopping rule for $\eta<1$ and $K$ sufficiently large.
The path from $0$ to $x_F$ is what we call the trunk, and we denote it by
$\mathfrak t=(\mathfrak t_0=0,\ldots,\mathfrak t_{N(\mathfrak t)}=x_F)$.
After deleting its edges, let $\mathfrak b_i$ be the branch rooted at $\mathfrak t_i$ and define, as in \cite{V}
\begin{align*}
 \mathfrak B&:=(\mathfrak b_0,\ldots,\mathfrak b_{N(\mathfrak t)}),\\
 N(\mathfrak B)&:=\sum_iN(\mathfrak b_i)=N(\mathfrak T)-N(\mathfrak t).
\end{align*}
Since consecutive vertices of the trunk are at $\xi_p$-distance at most
$(1+\eta)K+a$, one has
\begin{equation}\label{eq:trunk-length}
 N(\mathfrak t)\ge\frac{\xi_p(x)-3K}{(1+\eta)K+a}.
\end{equation}
\subsubsection{Geometry of typical trees: bound on the size}
\begin{lemma}
\label{lem:tree-size}
For every $\nu>0$ and $\varepsilon>0$, choose $\eta>0$ sufficiently small and $K$
sufficiently large. For all sufficiently large $n=\|x\|$,
\begin{equation}\label{eq:tree-size}
 \mathbb P_p\!\left(E_x(M),N(\mathfrak T)\ge(1+\nu)\xi_p(x)/K\right)
 \le Ce^{-(1+\nu/2)\xi_p(x)},
\end{equation}
and 
\begin{equation}\label{eq:branches-small}
 \mathbb P_p\!\left(E_x(M),N(\mathfrak B)\ge
                         \varepsilon\xi_p(x)/K\right)
 \le Ce^{-\xi_p(x)-c_\varepsilon n}.
\end{equation}
\end{lemma}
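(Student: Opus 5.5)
Both estimates come from combining the energy estimate with the entropy count of Lemma~\ref{lem:entropy} and summing over the number of skeleton vertices $m=N(\mathfrak T)+1$. For a fixed skeleton with $m$ vertices we already have
\[
 \mathbb P_p(E_x(M),\mathfrak T(C_0)=\mathfrak T)\le\exp\{-Km+cm\log K+cMn/(\eta K)\}.
\]
Multiplying by the $\exp(cm\log K)$ skeletons with $m$ vertices gives
\[
 \mathbb P_p(E_x(M),N(\mathfrak T)+1=m)\le\exp\{-Km(1-c'\log K/K)+cMn/(\eta K)\}.
\]
The plan is to make the two error terms negligible on the scale $\xi_p(x)\asymp n$. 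Recall $c_0 n\le\xi_p(x)\le a\sqrt d\, n$. First fix $\eta$ small, then take $K$ large, so that $c'\log K/K$ is tiny and $cMn/(\eta K)\le\delta\xi_p(x)$ with $\delta=\delta(K)\to0$. The order of choices is delicate: $\eta$ must be fixed before $K$, and the $M$-dependence of $cM/(\eta c_0 K)$ is beaten only by sending $K\to\infty$.

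For \eqref{eq:tree-size}, sum over $m\ge(1+\nu)\xi_p(x)/K$. The geometric series is dominated by its first term, giving the bound
\[
 C\exp\{-(1+\nu)(1-c'\log K/K)\xi_p(x)+\delta\xi_p(x)\}.
\]
Taking $K$ large so that $(1+\nu)(1-c'\log K/K)-\delta\ge1+\nu/2$ yields the claim.

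For \eqref{eq:branches-small}, we would use the trunk/branch split. By \eqref{eq:trunk-length}, $N(\mathfrak t)\ge(\xi_p(x)-3K)/((1+\eta)K+a)$. Hence on $\{N(\mathfrak B)\ge\varepsilon\xi_p(x)/K\}$ we get
\[
 m\ge N(\mathfrak t)+N(\mathfrak B)\ge\frac{\xi_p(x)}{K}\Big(\frac{1}{1+\eta+a/K}+\varepsilon\Big)-C.
\]
Plugging into the summed bound gives
\[
 \exp\Big\{-\xi_p(x)\Big[\Big(\frac{1}{1+\eta+a/K}+\varepsilon\Big)(1-c'\log K/K)-\delta\Big]+CK\Big\}.
\]
We choose $\eta\le\varepsilon/8$ and $K$ large, so that the bracket is at least $1+\varepsilon/2$. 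This yields $Ce^{-\xi_p(x)-(\varepsilon/2)c_0 n}$, which gives \eqref{eq:branches-small} with $c_\varepsilon=\varepsilon c_0/2$.

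The main obstacle is the bookkeeping of these constants. The trunk lower bound loses a factor $1/(1+\eta+a/K)$ relative to $\xi_p(x)/K$, and this loss must be dominated by the gain $\varepsilon$. That is exactly why $\eta$ must be small in terms of $\varepsilon$ (and $\nu$) before $K$ is taken large; the $a/K$ loss also disappears as $K\to\infty$. One also checks that the $\xi_p$-distance bound between consecutive trunk vertices uses $r_j\le(1+\eta)K$ and that a box step in the norm costs at most $a$. With these choices everything else is a geometric sum.
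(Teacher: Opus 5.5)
Your proposal is correct and follows the paper's route. You multiply the per-skeleton energy bound by the entropy count of Lemma~\ref{lem:entropy} and sum the geometric series over $m$, absorbing $e^{cMn/(\eta K)}$ via $\xi_p(x)\ge c_0 n$ by taking $K$ large after $\eta$; for the branch bound you use \eqref{eq:trunk-length} to force $N(\mathfrak T)$ above $\xi_p(x)/K$ by a fixed fraction. The only difference is cosmetic: the paper gets \eqref{eq:branches-small} by showing $N(\mathfrak t)\ge(1-\varepsilon/2)\xi_p(x)/K$ and then applying \eqref{eq:tree-size} with $\nu=\varepsilon/2$, whereas you redo the geometric sum directly.
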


\begin{proof}
The proof is a routine energy-entropy argument and essentially the same as in \cite[Section~2]{CI02}. Here are the ideas :
Apply the energy--entropy argument of \cite[Section~2]{CI02}
using the energy estimate derived earlier, and Lemma~\ref{lem:entropy}.
The additional factor $\exp\{cMn/(\eta K)\}$ is absorbed by taking
$K$ large, since $\xi_p(x)\ge c_0n$.
For \eqref{eq:branches-small}, use the argument of
\cite[Section~2.4, proof of Lemma~2.2]{CIV08}:
by \eqref{eq:trunk-length}, for $\eta$ small and then $K,n$ large,
\[
 N(\mathfrak t)\ge(1-\varepsilon/2)\xi_p(x)/K.
\]
Thus $N(\mathfrak B)\ge\varepsilon\xi_p(x)/K$ implies
$N(\mathfrak T)\ge(1+\varepsilon/2)\xi_p(x)/K$, and conclude by applying \eqref{eq:tree-size}.
\end{proof}
Call trunks such that $N(\mathfrak t)\le\nu_1\xi_p(x)/K$, $\nu_1>1$ \textit{admissible}.
In this language, equation~\eqref{eq:tree-size} shows that inadmissible trunks can be neglected.
\subsubsection{Geometry of typical trees: refined estimates}
\paragraph{The surcharge function.}
For $t\in\partial\mathsf K_p$, define
\begin{align*}
 \mathfrak s_t(y)&=\xi_p(y)-t\cdot y,\\
 \mathfrak s_t(\mathfrak t)&=\sum_{i=1}^{N(\mathfrak t)}\mathfrak s_t(\mathfrak t_i-\mathfrak t_{i-1}),
\end{align*}
For $0<\delta<1$, define
\[
 \mathcal Y_\delta^{\blacktriangleleft}:=\{y\in\mathbb R^d:\mathfrak s_t(y)\le\delta\xi_p(y)\},
\]
and also $\mathcal Y_\delta^{\blacktriangleright}=-\mathcal Y_\delta^{\blacktriangleleft},~ 
 \mathcal Y_\delta^{\blacktriangleleft}(z)=z+\mathcal Y_\delta^{\blacktriangleleft},~ 
 \mathcal Y_\delta^{\blacktriangleright}(z)=z+\mathcal Y_\delta^{\blacktriangleright}.$
\begin{proposition}
\label{prop:surcharge}
For every $\varepsilon>0$, choose $\eta$ small and then $K$ large.
For large enough $n$, uniformly in $t\in\partial\mathsf K_p$,
\begin{equation}\label{eq:surcharge}
 \mathbb P_p(E_x(M),\mathfrak s_t(\mathfrak t)+KN(\mathfrak B)\ge\varepsilon n)
 \le e^{-t\cdot x-\varepsilon n/2}.
\end{equation}
\end{proposition}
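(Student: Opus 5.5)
The plan is to run the energy--entropy argument of \cite[Section~2]{CIV08}, with the surcharge built in. The key point is to charge the trunk increments against $t\cdot(\mathfrak t_i-\mathfrak t_{i-1})$ rather than against $\xi_p$. First restrict to admissible trunks and to $N(\mathfrak B)\le\varepsilon\xi_p(x)/K$ up to the errors of Lemma~\ref{lem:tree-size}. We may assume $t\cdot x\le\xi_p(x)$, since $t\in\mathsf K_p$. Then \eqref{eq:tree-size} already gives a bound $e^{-(1+\nu/2)\xi_p(x)}\le e^{-t\cdot x-\varepsilon n/2}$ once $\nu$ is chosen with $\nu c_0/2>\varepsilon$. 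For the branches, rather than using \eqref{eq:branches-small} directly, I would keep the term $KN(\mathfrak B)$ inside the energy bound, so that it can be combined with the surcharge.

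For a fixed skeleton $\mathfrak T$ with trunk $\mathfrak t$ and branches $\mathfrak B$, I would refine the energy estimate. The bound $(cK^de^{-K})$ per vertex comes from \eqref{eq:local-energy}, which only uses $\xi_p(y-z)\ge K-a$. Along the trunk I would use instead that the cluster inside $A_i$ reaches the next trunk vertex. This gives a factor
\[
\sum_{y:\,\xi_p(y-\mathfrak t_{i-1})\in[K-a,(1+\eta)K+a]}G_p(y-\mathfrak t_{i-1})\mathbf 1_{\{y\text{ near }\mathfrak t_i\}}\le CK^{d}e^{-\xi_p(\mathfrak t_i-\mathfrak t_{i-1})+2a}
\]
by \eqref{eq:pointwise} and Lemma~\ref{lem:deco}. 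Each branch vertex still contributes $cK^de^{-K}$. Multiplying these, and including the boundary factor $e^{cMn/(\eta K)}$, yields
\[
\mathbb P_p(E_x(M),\mathfrak T(C_0)=\mathfrak T)\le\exp\Bigl\{-\sum_i\xi_p(\mathfrak t_i-\mathfrak t_{i-1})-KN(\mathfrak B)+cm\log K+\tfrac{cMn}{\eta K}\Bigr\}.
\]
Next I would write $\xi_p=\mathfrak s_t+t\cdot(\cdot)$. The sum of $t\cdot(\mathfrak t_i-\mathfrak t_{i-1})$ telescopes to $t\cdot x_F\ge t\cdot x-C K$, since $x\in\mathsf B_{3K}(x_F)$. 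The exponent therefore becomes $-t\cdot x-\mathfrak s_t(\mathfrak t)-KN(\mathfrak B)+O(K)+cm\log K+cMn/(\eta K)$.

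Then I would sum over skeletons. For the trunk, the number of admissible trunks with $N(\mathfrak t)\le\nu_1\xi_p(x)/K$ is at most $e^{cN(\mathfrak t)\log K}$, because each step ranges over $O(K^{d})$ positions and $O(\eta K)$ radii. This is $e^{c\nu_1 \xi_p(x)\log K/K}\le e^{\varepsilon n/8}$ for $K$ large. The branch count is handled by Lemma~\ref{lem:entropy}. Its factor $e^{cN(\mathfrak B)\log K}$ is absorbed into half of $e^{-KN(\mathfrak B)}$, and the same holds for the $\log K$ factors per branch vertex. The boundary term $cMn/(\eta K)$ is at most $\varepsilon n/8$ once $K\gg M/(\eta\varepsilon)$. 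Since $\mathfrak s_t\ge0$, the event $\mathfrak s_t(\mathfrak t)+KN(\mathfrak B)\ge\varepsilon n$ yields a gain $e^{-\varepsilon n/2\cdot(1+\text{margin})}$ after splitting off half of the branch term. Together the estimates give $e^{-t\cdot x-\varepsilon n/2}$. The constant $n$ large absorbs $e^{O(K)}$, and all constants are uniform in $t$ because $\zeta_\ell\in[\zeta_-,\zeta_+]$ and $\xi_p$ is comparable to the Euclidean norm.

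The main obstacle is the refined trunk energy estimate. The regions $A_i=\mathsf B_{r_i}(x_i)\setminus\mathcal C_{i-1}$ are not full balls, and the next trunk point $\mathfrak t_i$ is a boundary point of the earlier ball of its parent, not of $A_{i-1}$ itself. One therefore needs to check that the local cluster $C^{A_{i-1}}_{\mathfrak t_{i-1}}$ actually reaches within $O(1)$ of $\mathfrak t_i$. I would try to justify this by the fact that $j(i)$ is the parent index, so $\mathfrak t_i\in\partial^{\mathrm{ext}}\mathsf B_{r_{j(i)}}(\mathfrak t_{i-1})$ and is adjacent to a vertex of $C$ in that ball. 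If that vertex were not in the local cluster, I would fall back to the cruder estimate in which the lost $O(a)$ per step costs $e^{O(N(\mathfrak t))}=e^{O(n/K)}$, which is still negligible.
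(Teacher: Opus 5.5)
Your global bookkeeping is sound: you telescope $t\cdot(\mathfrak t_i-\mathfrak t_{i-1})$ to $t\cdot x_F\ge t\cdot x-3K$, cut large trees via \eqref{eq:tree-size} using $t\cdot x\le\xi_p(x)$, and absorb the entropy and $cMn/(\eta K)$ terms. But the step you build on, the refined trunk factor $e^{-\xi_p(\mathfrak t_i-\mathfrak t_{i-1})}$, is not available with this coarse-graining. Write $\mathfrak t_{i-1}=x_j$ and $\mathfrak t_i=x_k$ with $j=j(k)$. The refined factor needs the local cluster $C^{A_j}_{x_j}$ to come within $O(1)$ of $x_k$, and nothing forces this. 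The child $x_k$ is only some vertex of $C\cap\partial^{\mathrm{ext}}\mathcal C_{k-1}$, and $j(k)=j$ only says that $x_k$ is lattice-adjacent to $\mathsf B_{r_j}(x_j)$. That neighbour need not belong to $C$, contrary to what you assert. Even when it does, $C$ may join it to $x_j$ only through paths that leave $A_j$. For instance, $C$ can exit $\mathsf B_{r_j}(x_j)$ on one side, wrap around, and touch it from outside on the opposite side, where $x_k$ is then selected. The only thing guaranteed inside $A_j$ is the event $F_j$.

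So you must use your fallback throughout, and its cost is not $O(a)$ per step. The crude factor is $e^{-K}$ up to $K^d$, while $\xi_p(\mathfrak t_i-\mathfrak t_{i-1})$ can be as large as $(1+\eta)K+a$. Each trunk step therefore loses up to $\eta K+O(a)$, giving a total loss of order $(\eta+O(a)/K)\,K N(\mathfrak t)\lesssim\eta\nu_1\xi_p(x)$. This is of order $\eta n$, not $O(n/K)$. It is absorbed only because $\eta$ is chosen small after $\varepsilon$ (and after $\nu_1$), which the statement's quantifier order allows. With this correction your argument is exactly the paper's. It combines the crude energy bound $\exp\{-Km+cm\log K+cMn/(\eta K)\}$ with the deterministic inequality $K(N(\mathfrak T)+1)\ge t\cdot x+Q-(\eta K+a)N(\mathfrak t)-2K$, where $Q=\mathfrak s_t(\mathfrak t)+KN(\mathfrak B)$.

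There is one more bookkeeping slip. Absorbing the branch entropy into half of $KN(\mathfrak B)$ leaves only a guaranteed gain $Q/2\ge\varepsilon n/2$, and your $\varepsilon n/4$ of error terms then eats into it, giving $e^{-t\cdot x-\varepsilon n/4}$ rather than the claim. Absorb the branch entropy into a small fraction of $KN(\mathfrak B)$ instead, so the gain is close to $\varepsilon n$. Alternatively, as the paper does, bound all the entropy by $cLn\log K/K$ using the cut on $N(\mathfrak T)$.
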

\begin{proof}
Let $n=\norm{x}$ and define
\[
 Q:=\mathfrak s_t(\mathfrak t)+KN(\mathfrak B).
\]
Since $x_F\in\mathsf B_{3K}(x)$ and every trunk edge has
$\xi_p$-length at most $(1+\eta)K+a$, one has
\[
\begin{aligned}
 K\bigl(N(\mathfrak T)+1\bigr)
 &\ge \sum_{i=1}^{N(\mathfrak t)}
          \xi_p(\mathfrak t_i-\mathfrak t_{i-1})
          +KN(\mathfrak B)-(\eta K+a)N(\mathfrak t)+K\\
 &\ge t\cdot x+Q-(\eta K+a)N(\mathfrak t)-2K.
\end{aligned}
\]
Fix a sufficiently large constant $L$. For trees with
$N(\mathfrak T)\le Ln/K$, the energy estimate and
Lemma~\ref{lem:entropy} therefore gives
\[
\begin{aligned}
 &\mathbb P_p\bigl(E_x(M),Q\ge\varepsilon n,
                         N(\mathfrak T)\le Ln/K\bigr)\\
 &\quad\le
 (1+Ln/K)\exp\left\{-t\cdot x-\varepsilon n
       +cL\left(\eta+\frac{\log K}{K}\right)n
       +\frac{cMn}{\eta K}+cK\right\}.
\end{aligned}
\]
Choose $\eta$ small and then $K$ large so that the two terms
proportional to $n$ in the error are at most $\varepsilon n/4$.
For sufficiently large $n$, this bound is at most
$\tfrac12 e^{-t\cdot x-\varepsilon n/2}$.

For the remaining trees, the same energy--entropy estimate gives
\[
 \mathbb P_p\bigl(E_x(M),N(\mathfrak T)>Ln/K\bigr)
 \le C\exp\left\{-\frac L2n+\frac{cMn}{\eta K}\right\}.
\]
Since $|t\cdot x|= O(n)$, uniformly in
$t\in\partial\mathsf K_p$, taking $L$ sufficiently large makes this
bound at most $\tfrac12e^{-t\cdot x-\varepsilon n/2}$.
This proves \eqref{eq:surcharge}. The last assertion follows from
$t\cdot x=\xi_p(x)-\mathfrak s_t(x)$.
\end{proof}

\paragraph{Typical trunks have many cone-points.}
A trunk vertex $\mathfrak t_i$ is a forward cone-point if all its successors belong to $\mathcal Y_\delta^{\blacktriangleleft}(\mathfrak t_i)$, and a backward cone-point
if all its predecessors belong to $\mathcal Y_\delta^{\blacktriangleright}(\mathfrak t_i)$.
A cone-point of the trunk satisfies both conditions.
Define recursively
\begin{align*}
 \ell_1^{\blacktriangleleft}
  &=\min\{i:\mathfrak t_i\text{ is not a forward cone-point}\},\\
 r_k^{\blacktriangleleft}
  &=\min\{j>\ell_k^{\blacktriangleleft}:
        \mathfrak t_j\notin \mathcal Y_\delta^{\blacktriangleleft}(\mathfrak t_{\ell_k^{\blacktriangleleft}})\},\\
 \ell_{k+1}^{\blacktriangleleft} &=\min\{i\ge r_k^{\blacktriangleleft}:
                   \mathfrak t_i\text{ is not a forward cone-point}\}.
\end{align*}
Stop when the set is empty. Define the backward intervals by reversing
the order and using $\mathcal Y_\delta^{\blacktriangleright}$.
We say that a trunk vertex $\mathfrak t_i$ is marked if
\[
i\in
\bigcup_k
[\ell_k^{\blacktriangleleft},r_k^{\blacktriangleleft})
\;\cup\;
\bigcup_k
(r_k^{\blacktriangleright},\ell_k^{\blacktriangleright}].
\]
Let $\#_\delta^{\mathrm{marked}}(\mathfrak t)$ be the number of marked vertices.
Note that every unmarked vertex is a cone-point of the trunk.

\begin{lemma}[Corresponding to Lemma 4.4 of \cite{V}]
\label{lem:marked}
For $0<\delta<1/3$,
\begin{equation}\label{eq:marked-deterministic}
 \#_\delta^{\mathrm{marked}}(\mathfrak t)
 \le\frac{2\mathfrak s_t(\mathfrak t)}{\delta K}.
\end{equation}
For every $\varepsilon>0$, there exist $a_0,c,C>0$ and suitable
$\eta,K$ such that, if $\mathfrak s_t(x)\le a_0 n$,
\[
 \mathbb P_p(E_x(M),\#_\delta^{\mathrm{marked}}(\mathfrak t)\ge\varepsilon N(\mathfrak t))\le Ce^{-\xi_p(x)-cn}.
\]
\end{lemma}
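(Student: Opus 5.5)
The plan has two parts. First comes a deterministic counting argument that gives \eqref{eq:marked-deterministic}. Then that bound is combined with Proposition~\ref{prop:surcharge} and the trunk-length bound \eqref{eq:trunk-length}.

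\emph{Deterministic part.} I treat the forward intervals $[\ell_k^{\blacktriangleleft},r_k^{\blacktriangleleft})$; the backward ones are symmetric, and this is where the factor $2$ comes from. Fix $k$. Since $\mathfrak t_{r_k}\notin\mathcal Y_\delta^{\blacktriangleleft}(\mathfrak t_{\ell_k})$, we have
\[
\mathfrak s_t(\mathfrak t_{r_k}-\mathfrak t_{\ell_k})>\delta\,\xi_p(\mathfrak t_{r_k}-\mathfrak t_{\ell_k}).
\]
The surcharge $\mathfrak s_t$ is subadditive, because $\xi_p$ is a norm and $t\cdot y$ is linear. Hence the surcharge accumulated along the trunk steps from $\ell_k$ to $r_k$ dominates $\mathfrak s_t(\mathfrak t_{r_k}-\mathfrak t_{\ell_k})$.

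It remains to bound the number $r_k-\ell_k$ of marked indices in terms of $\xi_p(\mathfrak t_{r_k}-\mathfrak t_{\ell_k})/K$. For this I use the definition of $r_k$ as the \emph{first} exit from the cone, so all intermediate vertices lie in $\mathcal Y_\delta^{\blacktriangleleft}(\mathfrak t_{\ell_k})$. I also use that trunk vertices are $\xi_p$-separated by roughly $K$, since each new centre lies outside the previously explored balls of radius $\ge K$. Inside the cone, $t\cdot y\ge(1-\delta)\xi_p(y)$, so heights grow.

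Following \cite[Lemma~4.4]{V}, I would argue as follows. Either the excursion is short and its cost is at least $\delta K$ per marked vertex, or it uses many steps. In the latter case, each step with small $t$-progress produces surcharge of order $K$ directly. The condition $\delta<1/3$ is used to guarantee that the forward cone and the reversed backward cone do not overlap badly, so that the intervals $[\ell_k,r_k)$ for different $k$ charge disjoint groups of trunk edges. Summing over $k$ gives $\#^{\mathrm{marked}}\le 2\mathfrak s_t(\mathfrak t)/(\delta K)$.

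\emph{Probabilistic part.} On the event $\{\#^{\mathrm{marked}}\ge\varepsilon N(\mathfrak t)\}$, \eqref{eq:marked-deterministic} gives
\[
\mathfrak s_t(\mathfrak t)\ge \tfrac12\delta K\varepsilon N(\mathfrak t).
\]
By \eqref{eq:trunk-length}, $N(\mathfrak t)\ge c\,\xi_p(x)/K\ge c c_0 n/K$, so $\mathfrak s_t(\mathfrak t)\ge c'\delta\varepsilon n=:\varepsilon' n$. Proposition~\ref{prop:surcharge}, applied with $\varepsilon'$ and suitable $\eta,K$, then bounds the probability by $e^{-t\cdot x-\varepsilon' n/2}$. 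Since $t\cdot x=\xi_p(x)-\mathfrak s_t(x)\ge \xi_p(x)-a_0 n$, choosing $a_0<\varepsilon'/4$ yields $Ce^{-\xi_p(x)-cn}$.

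\emph{Main obstacle.} The delicate step is the deterministic bound: relating the number of marked indices to the surcharge with constant exactly $2/(\delta K)$. Marked indices inside a long excursion might a priori carry little surcharge individually. I would try to use the first-exit structure and the $K$-separation of skeleton vertices (each step has $\xi_p$-length at least about $K$) to show that the excursion's total surcharge is at least $\delta K(r_k-\ell_k)/2$. Failing that, I would accept a worse constant, which still suffices for the probabilistic part.
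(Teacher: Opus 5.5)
\textbf{There is a genuine gap in your deterministic part.} Your probabilistic part is fine and matches the paper. On the event you get $\mathfrak s_t(\mathfrak t)\ge\tfrac12\delta K\varepsilon N(\mathfrak t)\ge c\delta\varepsilon n$ via \eqref{eq:trunk-length}, then apply Proposition~\ref{prop:surcharge} and use $t\cdot x\ge\xi_p(x)-a_0n$.

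The problem is in \eqref{eq:marked-deterministic}. Passing through subadditivity to $\mathfrak s_t(\mathfrak t_{r_k}-\mathfrak t_{\ell_k})>\delta\,\xi_p(\mathfrak t_{r_k}-\mathfrak t_{\ell_k})$ discards exactly the information you need. The step you then require, bounding $r_k-\ell_k$ by $\xi_p(\mathfrak t_{r_k}-\mathfrak t_{\ell_k})/K$, fails in general. The trunk can run $m$ steps forward inside $\mathcal Y_\delta^{\blacktriangleleft}(\mathfrak t_{\ell_k})$, return along a parallel track (offset by order $K$) still inside the cone, and exit near the apex. Then $r_k-\ell_k\approx 2m$, while the net displacement has $\xi_p$-length $O_\delta(K)$. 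The $K$-separation of skeleton vertices does not prevent this, and the ``short versus long excursion'' dichotomy is not an argument. Your fallback of accepting a worse constant does not rescue it. It only works if the bound remains of the form $C_\delta\,\mathfrak s_t(\mathfrak t)/K$ with $C_\delta$ independent of $K,\eta$, because $\eta,K$ are chosen after $\varepsilon'=c\delta\varepsilon$ in Proposition~\ref{prop:surcharge}.

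\textbf{The fix.} Keep the exact telescoping identity instead of subadditivity. For a forward marked interval $[i,j)=[\ell_k,r_k)$, set $L_{i,j}=\sum_{k=i+1}^j\xi_p(\mathfrak t_k-\mathfrak t_{k-1})$. Linearity of $y\mapsto t\cdot y$ gives
\[
\sum_{k=i+1}^j\mathfrak s_t(\mathfrak t_k-\mathfrak t_{k-1})=L_{i,j}-t\cdot(\mathfrak t_j-\mathfrak t_i).
\]
\begin{itemize}
\item Since $\mathfrak t_j\notin\mathcal Y_\delta^{\blacktriangleleft}(\mathfrak t_i)$, one has $t\cdot(\mathfrak t_j-\mathfrak t_i)<(1-\delta)\,\xi_p(\mathfrak t_j-\mathfrak t_i)$.
\item By the triangle inequality this is at most $(1-\delta)L_{i,j}$, so the excursion's total surcharge is at least $\delta L_{i,j}$.
\item Each trunk step has $\xi_p$-length $>K$, because a skeleton vertex lies outside its parent's ball $\mathsf B_{r}(\cdot)$ with $r\ge K$. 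Hence $L_{i,j}>K(j-i)$.
\end{itemize}
The key is comparing with the total path length $L_{i,j}$ rather than the net displacement; this is what makes zig-zagging excursions pay.

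Finally, the forward intervals are disjoint by construction, since $\ell_{k+1}\ge r_k$; no cone geometry or $\delta<1/3$ is needed for that. Step surcharges are nonnegative because $t\in\mathsf K_p$. Summing therefore gives at most $\mathfrak s_t(\mathfrak t)/(\delta K)$ forward marked vertices, and likewise for the backward ones, which is where the factor $2$ comes from.
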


\begin{proof}
Use the marked-interval argument of \cite[Lemma~4.4]{V}.
For the constant in \eqref{eq:marked-deterministic}, on each forward
marked interval $[i,j)$, define
$L_{i,j}=\sum_{k=i+1}^j\xi_p(\mathfrak t_k-\mathfrak t_{k-1})$. Then
\[
 \sum_{k=i+1}^j \mathfrak s_t(\mathfrak t_k-\mathfrak t_{k-1})=L_{i,j}-t\cdot(\mathfrak t_j-\mathfrak t_i)\ge\delta L_{i,j}>\delta K(j-i),
\]
since $\mathfrak t_j-\mathfrak t_i\notin \mathcal Y_\delta^{\blacktriangleleft}$ and
$\xi_p(\mathfrak t_j-\mathfrak t_i)\le L_{i,j}$.
Sum over the disjoint forward intervals and then the backward intervals.
The probability bound follows from Proposition~\ref{prop:surcharge}
and $N(\mathfrak t)\ge c_0n/(4K)$, as in the cited proof.
\end{proof}

\paragraph{Typical trees have many cone-points.}
A vertex $\mathfrak t_i$ of the trunk is a cone-point of $\mathfrak T$ when
\[
 \mathfrak T\subset \mathcal Y_{2\delta}^{\blacktriangleright}(\mathfrak t_i)\cup \mathcal Y_{2\delta}^{\blacktriangleleft}(\mathfrak t_i).
\]
A trunk cone-point is blocked if it doesn't satisfy this. Let
$\#_\delta^{\mathrm{blocked}}(\mathfrak T)$ be the number of blocked points.
We write $\#_\delta^{\mathrm{cone}}(\mathfrak T)$ for the number of cone-points
of the tree among the vertices of the trunk.

\begin{lemma}
\label{lem:blocked}
For a constant $c_\delta$,
\begin{equation}\label{eq:blocked-deterministic}
 \#_\delta^{\mathrm{blocked}}(\mathfrak T)\le c_\delta N(\mathfrak B).
\end{equation}
For every $\varepsilon>0$ and suitable $\eta,K$,
\[
 \mathbb P_p(E_x(M),\#_\delta^{\mathrm{blocked}}(\mathfrak T)\ge\varepsilon N(\mathfrak t))\le Ce^{-\xi_p(x)-cn}.
\]
\end{lemma}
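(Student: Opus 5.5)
The argument has two parts: a deterministic counting bound, then a combination with Lemma~\ref{lem:tree-size}. This is the standard argument of \cite[Lemma~4.5]{V}, adapted to our skeleton.

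\textbf{Deterministic bound.} Fix a trunk cone-point $\mathfrak t_i$ that is blocked. Then some skeleton vertex $y\in\mathfrak T$ lies outside $\mathcal Y_{2\delta}^{\blacktriangleright}(\mathfrak t_i)\cup\mathcal Y_{2\delta}^{\blacktriangleleft}(\mathfrak t_i)$. Every trunk vertex lies in $\mathcal Y_{\delta}^{\blacktriangleright}(\mathfrak t_i)\cup\mathcal Y_{\delta}^{\blacktriangleleft}(\mathfrak t_i)$, so $y$ is a branch vertex. Let $\mathfrak b_j$ be the branch containing $y$, rooted at the trunk vertex $\mathfrak t_j$.

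We first show that $\mathfrak b_j$ must be large. Assume $j\ge i$; the case $j<i$ is symmetric. Then $\mathfrak t_j\in\mathcal Y_\delta^{\blacktriangleleft}(\mathfrak t_i)$. A point $w$ with $w-\mathfrak t_i\in\mathcal Y_\delta^{\blacktriangleleft}$ satisfies $\mathfrak s_t(w-\mathfrak t_i)\le\delta\xi_p(w-\mathfrak t_i)$. To leave $\mathcal Y_{2\delta}^{\blacktriangleleft}(\mathfrak t_i)$, the surcharge must increase by at least $\delta\xi_p(\cdot)$ relative to the $\xi_p$-length. Since $\mathfrak s_t$ is subadditive and $\mathfrak s_t\le 2\xi_p$, moving from $\mathfrak t_j$ to $y$ requires $2\xi_p(y-\mathfrak t_j)\ge \delta\,\xi_p(\mathfrak t_j-\mathfrak t_i)$, up to constants.

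Each skeleton edge has $\xi_p$-length at most $(1+\eta)K+a\le 2K$. Hence $N(\mathfrak b_j)\ge \xi_p(y-\mathfrak t_j)/(2K)\ge c\delta\,\xi_p(\mathfrak t_j-\mathfrak t_i)/K$.

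The trunk vertices $\mathfrak t_k$ satisfy $\xi_p(\mathfrak t_k-\mathfrak t_j)\ge$ a fraction of $K|k-j|$. This holds up to the marked vertices; since we only count trunk cone-points, one can use the cone property at $\mathfrak t_j$ or $\mathfrak t_i$. It follows that a branch $\mathfrak b_j$ can block only those trunk cone-points $\mathfrak t_i$ with $|i-j|\le c_\delta' N(\mathfrak b_j)$.

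There is one degenerate case: a small branch may block $\mathfrak t_i=\mathfrak t_j$ or its close neighbours. This costs only $O(1)$ blocked points per nonempty branch, so it is also $\le C N(\mathfrak b_j)$ whenever $N(\mathfrak b_j)\ge1$.

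Summing over $j$ gives $\#^{\mathrm{blocked}}_\delta(\mathfrak T)\le\sum_j (2c_\delta'+C)N(\mathfrak b_j)= c_\delta N(\mathfrak B)$, which is \eqref{eq:blocked-deterministic}.

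\textbf{Probabilistic bound.} By \eqref{eq:blocked-deterministic}, the event $\#_\delta^{\mathrm{blocked}}\ge\varepsilon N(\mathfrak t)$ forces $N(\mathfrak B)\ge \varepsilon N(\mathfrak t)/c_\delta$. On the event that \eqref{eq:tree-size} fails to apply, the trunk-length bound \eqref{eq:trunk-length} gives $N(\mathfrak t)\ge \xi_p(x)/(2K)$ for $\eta$ small and $K,n$ large. Therefore $N(\mathfrak B)\ge \varepsilon'\xi_p(x)/K$ with $\varepsilon'=\varepsilon/(2c_\delta)$. Applying \eqref{eq:branches-small} with $\varepsilon'$, choosing $\eta$ and $K$ accordingly, bounds the probability by $Ce^{-\xi_p(x)-c_{\varepsilon'}n}$.

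\textbf{Main obstacle.} The delicate point is the geometric localisation in the first step: showing that a branch rooted at $\mathfrak t_j$ can only block cone-points within distance $O(N(\mathfrak b_j))$ along the trunk. This needs the quantitative separation between $\mathcal Y_\delta$ and $\mathcal Y_{2\delta}$ cones, which gives distance proportional to $\delta\,\xi_p$. It also needs the fact that trunk cone-points progress linearly in $\xi_p$. I would handle this exactly as in \cite[Lemma~4.5]{V}, using that $\xi_p$ is a norm equivalent to $\|\cdot\|$ with constants $c_0$ and $a$.
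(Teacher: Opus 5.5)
Your route is the paper's route, which imports the per-branch bound $X_j\le c_\delta N(\mathfrak b_j)$ from \cite[Lemma~2.5]{CIV08}. You bound the blocked cone-points per branch, sum over branches, and then combine the deterministic bound \eqref{eq:trunk-length} with \eqref{eq:branches-small}. Your probabilistic part is fine; the phrase ``on the event that \eqref{eq:tree-size} fails to apply'' is unnecessary, because \eqref{eq:trunk-length} holds deterministically. Your first geometric step is also correct. If $\mathfrak t_i$ is blocked by $\mathfrak b_j$, subadditivity of $\mathfrak s_t$ and $\mathfrak s_t\le 2\xi_p$ give $\xi_p(\mathfrak t_j-\mathfrak t_i)\le C\delta^{-1}\xi_p(y-\mathfrak t_j)\le C\delta^{-1}KN(\mathfrak b_j)$.

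The step that fails is the localisation in trunk index, $|i-j|\le c'_\delta N(\mathfrak b_j)$, and the claim behind it that $\xi_p(\mathfrak t_k-\mathfrak t_j)$ is at least a fraction of $K|k-j|$. Trunk indices are not controlled by displacements. Between two trunk cone-points, the trunk can go forward inside $\mathcal Y_\delta^{\blacktriangleleft}(\mathfrak t_i)$ and come back, so the stretch contains arbitrarily many vertices with little net progress. A modest branch rooted at such a $\mathfrak t_j$ can then block $\mathfrak t_i$ with $|i-j|\gg N(\mathfrak b_j)$. Saying ``up to the marked vertices'' does not repair this. Nor can you invoke the cone property at $\mathfrak t_j$, since a branch root need not be a cone-point.

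What you must bound is the number of cone-points in the window, not its index length. Skeleton vertices are pairwise at $\xi_p$-distance $>K$, because $x_{i+1}\notin\mathcal C_i$. So if $i<i'$ are both trunk cone-points, then $\mathfrak t_{i'}\in\mathcal Y_\delta^{\blacktriangleleft}(\mathfrak t_i)$ gives $t\cdot(\mathfrak t_{i'}-\mathfrak t_i)\ge(1-\delta)\xi_p(\mathfrak t_{i'}-\mathfrak t_i)>(1-\delta)K$. Hence the $t$-projections of trunk cone-points increase strictly, with gaps $>(1-\delta)K$. Every cone-point blocked by $\mathfrak b_j$ satisfies $|t\cdot(\mathfrak t_i-\mathfrak t_j)|\le\xi_p(\mathfrak t_i-\mathfrak t_j)\le C\delta^{-1}KN(\mathfrak b_j)$, using $|t\cdot y|\le\xi_p(y)$ by symmetry of $\xi_p$. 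There are therefore at most $2C\delta^{-1}N(\mathfrak b_j)/(1-\delta)+1\le c_\delta N(\mathfrak b_j)$ of them, since a blocking branch has $N(\mathfrak b_j)\ge1$. This also absorbs your ``degenerate case''. With this replacement, your summation over $j$ goes through unchanged.
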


\begin{proof}

We use same argument as in the proof of
\cite[Section~2.7, Lemma~2.5]{CIV08}.
Let $X_i$ be the number of cone-points of the trunk blocked
by the branch $\mathfrak b_i$. By that argument,
$X_i\le c_\delta N(\mathfrak b_i)$.
Since every blocked cone-point is blocked by at least one branch,
\[\#_\delta^{\mathrm{blocked}}(\mathfrak T)\le \sum_i X_i \le c_\delta\sum_i N(\mathfrak b_i)=c_\delta N(\mathfrak B),
\]
which proves \eqref{eq:blocked-deterministic}.

For $\eta$ sufficiently small, $K$ sufficiently large, and
$n=\norm{x}$ sufficiently large, \eqref{eq:trunk-length} gives
$N(\mathfrak t)\ge \xi_p(x)/(4K)$. Consequently,
\[
 \left\{\#_\delta^{\mathrm{blocked}}(\mathfrak T)
 \ge\varepsilon N(\mathfrak t)
 \right\}
 \subset
 \left\{
 N(\mathfrak B)\ge
 \frac{\varepsilon}{4c_\delta}\frac{\xi_p(x)}{K}
 \right\}.
\]
Applying \eqref{eq:branches-small} with
$\varepsilon/(4c_\delta)$ proves the probability bound.
\end{proof}

\begin{theorem}
\label{thm:tree-cones}
For every $\varepsilon>0$ and $0<\delta<1/3$, small enough $\eta$ and big enough $K$ give
\[
 \mathbb P_p(E_x(M),\#_\delta^{\mathrm{cone}}(\mathfrak T)\le(1-\varepsilon)N(\mathfrak t))\le Ce^{-\xi_p(x)-cn}
\]
whenever $\mathfrak s_t(x)\le a_0n$, for a sufficiently small $a_0>0$.
Moreover, the number of trunk vertices which are either marked or
blocked is at most
\begin{equation}\label{eq:tree-cones}
 c_\delta\bigl(K^{-1}\mathfrak s_t(\mathfrak t)+N(\mathfrak B)\bigr).
\end{equation}
\end{theorem}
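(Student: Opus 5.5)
The plan is to combine Lemma~\ref{lem:marked} and Lemma~\ref{lem:blocked} by a counting argument on the trunk. The deterministic statement \eqref{eq:tree-cones} comes first. Every trunk vertex $\mathfrak t_i$ falls into one of three classes. It may be marked. It may be unmarked, hence a cone-point of the trunk, but blocked. Or it may be unmarked and not blocked. We check that a vertex in the last class is a cone-point of $\mathfrak T$. Since the vertex is unmarked, all trunk successors lie in $\mathcal Y_\delta^{\blacktriangleleft}(\mathfrak t_i)\subset\mathcal Y_{2\delta}^{\blacktriangleleft}(\mathfrak t_i)$ and all trunk predecessors lie in $\mathcal Y_\delta^{\blacktriangleright}(\mathfrak t_i)$. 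Since it is not blocked, the whole tree lies in the union of the two $2\delta$-cones. Hence
\[
N(\mathfrak t)+1-\#_\delta^{\mathrm{cone}}(\mathfrak T)\le \#_\delta^{\mathrm{marked}}(\mathfrak t)+\#_\delta^{\mathrm{blocked}}(\mathfrak T)\le \frac{2\mathfrak s_t(\mathfrak t)}{\delta K}+c_\delta N(\mathfrak B).
\]
Here the last step uses \eqref{eq:marked-deterministic} and \eqref{eq:blocked-deterministic}. This gives \eqref{eq:tree-cones} after enlarging $c_\delta$ to $\max(2/\delta,c_\delta)$.

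For the probabilistic statement, the event $\{\#_\delta^{\mathrm{cone}}(\mathfrak T)\le(1-\varepsilon)N(\mathfrak t)\}$ forces $\#^{\mathrm{marked}}_\delta+\#^{\mathrm{blocked}}_\delta\ge\varepsilon N(\mathfrak t)$. So one of the two counts is at least $\tfrac{\varepsilon}{2}N(\mathfrak t)$. A union bound, with Lemma~\ref{lem:marked} applied with $\varepsilon/2$ and Lemma~\ref{lem:blocked} applied with $\varepsilon/2$, then gives the bound $2Ce^{-\xi_p(x)-cn}$. The constant $a_0$ is the one from Lemma~\ref{lem:marked}. The parameters $\eta$ and $K$ are taken to satisfy both lemmas simultaneously: first $\eta$ is chosen small enough for both, then $K$ large enough for both. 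This is allowed because each lemma only requires $\eta$ sufficiently small and $K$ sufficiently large.

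The main point needing care is the inclusion ``unmarked and not blocked implies cone-point of $\mathfrak T$''. This relies on the definition of blocked, which is stated relative to trunk cone-points and the $2\delta$-cones, together with $\mathcal Y_\delta\subset\mathcal Y_{2\delta}$. A second point is uniformity of the constants in $t$ and $x$ when the lemmas are invoked with $\varepsilon/2$. Both lemmas are already uniform under the hypothesis $\mathfrak s_t(x)\le a_0 n$, so nothing further is required.
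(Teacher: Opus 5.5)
Your proposal is correct and is essentially the paper's proof. The paper also gets \eqref{eq:tree-cones} by adding \eqref{eq:marked-deterministic} and \eqref{eq:blocked-deterministic}, using that every unmarked, unblocked trunk vertex is a cone-point of $\mathfrak T$. It then gets the probability bound by splitting into $\#_\delta^{\mathrm{marked}}(\mathfrak t)\ge\tfrac{\varepsilon}{2}N(\mathfrak t)$ or $\#_\delta^{\mathrm{blocked}}(\mathfrak T)\ge\tfrac{\varepsilon}{2}N(\mathfrak t)$ and taking a union bound over Lemmas~\ref{lem:marked} and~\ref{lem:blocked}, with common $\eta,K$ and small $a_0$; your extra checks only make explicit details the paper leaves implicit.
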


\begin{proof}
We combine the marked-point and blocked-point estimates of
\cite[Sections~2.6--2.8, Lemmas~2.4--2.5]{CIV08}.

Every unmarked and unblocked trunk vertex is a cone-point
of the tree. By the deterministic estimates in
Lemmas~\ref{lem:marked} and~\ref{lem:blocked},
\begin{align*}
 \#\{i:\mathfrak t_i\text{ is marked or blocked}\}
 &\le \#_\delta^{\mathrm{marked}}(\mathfrak t)
      +\#_\delta^{\mathrm{blocked}}(\mathfrak T)\\
 &\le c_\delta\bigl(
 K^{-1}\mathfrak s_t(\mathfrak t)+N(\mathfrak B)\bigr),
\end{align*}
which proves \eqref{eq:tree-cones}.

Moreover, since the trunk has $N(\mathfrak t)+1$ vertices,
\begin{align*}
 &\left\{
 \#_\delta^{\mathrm{cone}}(\mathfrak T)
 \le(1-\varepsilon)N(\mathfrak t)
 \right\}\\
 &\quad\subset
 \left\{
 \#_\delta^{\mathrm{marked}}(\mathfrak t)
 \ge\frac{\varepsilon}{2}N(\mathfrak t)
 \right\}
 \cup
 \left\{
 \#_\delta^{\mathrm{blocked}}(\mathfrak T)
 \ge\frac{\varepsilon}{2}N(\mathfrak t)
 \right\}.
\end{align*}
Choose the existing $\eta,K$ of Lemmas~\ref{lem:marked} and~\ref{lem:blocked} and take $a_0$ sufficiently small. A union bound then gives the claimed estimate whenever $\mathfrak s_t(x)\le a_0n$.
\end{proof}

\subsection{Geometry of typical clusters}
\subsubsection{Cone-points of \texorpdfstring{$C$}{C}}
\label{sec:surgery}

\paragraph{Cones.}
For fixed $t$, choose $e\in\{\pm e_1,\ldots,\pm e_d\}$ such that
$r=t\cdot e=\max_j|t\cdot e_j|$, and write  $y^\parallel:=t\cdot y/r$, and 
\begin{align*}
 \mathcal Y^+&=\mathcal Y_{3\delta}^{\blacktriangleleft},\\
 \mathcal Y^-&=-\mathcal Y^+.
\end{align*}

\paragraph{Cone-points of the cluster.}

For a finite connected graph $C$, a vertex $v$ is a \emph{cone-point of $C$}
if both of the following conditions hold:
\[
 \begin{gathered}
 C\subset(v+\mathcal Y^-)\cup(v+\mathcal Y^+),\\
 C\cap\{z:v^\parallel-1\le z^\parallel\le v^\parallel+1\}
 =\{v-e,v,v+e\},
\end{gathered}
\]
which is nothing else than the definition of Campanino and Ioffe~\cite[Section~4.1]{CI02}. We say that $v$ is a \textit{forward cone-point} of $C$ if 
$$ C\subset(v+\mathcal Y^+),$$
and 
$$C\cap\{z:v^\parallel-1\le z^\parallel\le v^\parallel+1\}
 =\{v,v+e\}.$$
We say that $v$ is a \textit{backward cone-point} of $C$ if 
$$ C\subset(v+\mathcal Y^-),$$
and 
$$C\cap\{z:v^\parallel-1\le z^\parallel\le v^\parallel+1\}
 =\{v,v-e\}.$$
Let $\#_\delta^{\mathrm{cone}}(C;0,x)$ count the cone-points.
\begin{theorem}
\label{prop:cones}\label{lemma:few_conepoints}
There exist $a_0,\rho,c,C>0$ such that, if $\mathfrak s_t(x)\le a_0\|x\|$,
\begin{align}
 \mathbb P_p(E_x(M),\#_\delta^{\mathrm{cone}}(C_0;0,x)\le\rho\|x\|)
 &\le Ce^{-\xi_p(x)-c\|x\|},\label{eq:cones-thin}\\
 \mathbb P_p(x\in C_0,\ |C_0|<\infty,\
                  \#_\delta^{\mathrm{cone}}(C_0;0,x)\le\rho\|x\|)
 &\le Ce^{-\xi_p(x)-c\|x\|^\beta}.
\end{align}
The constants are uniform in $t\in\partial\mathsf K_p$.
\end{theorem}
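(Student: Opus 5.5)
The second bound will follow from the first together with the a priori volume control. Indeed,
\[
\{x\in C_0,\ |C_0|<\infty\}\setminus E_x(M)=\{0\leftrightarrow x,\ Mn<|C_0|<\infty\},
\]
which by Theorem~\ref{thm:main} (with $M\ge K$ there) has probability at most $Ce^{-\xi_p(x)-cn^\beta}$. Since $n^\beta\le n$, adding \eqref{eq:cones-thin} gives the second estimate. So the work is in \eqref{eq:cones-thin}, and the plan is the standard Campanino--Ioffe surgery that converts skeleton cone-points into cluster cone-points.

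\textbf{Step 1 (many tree cone-points).} By Theorem~\ref{thm:tree-cones}, up to an error $Ce^{-\xi_p(x)-cn}$ we may assume the tree has at least $(1-\varepsilon)N(\mathfrak t)\ge c n/K$ cone-points along the trunk. We may also assume, by Proposition~\ref{prop:surcharge} and \eqref{eq:branches-small}, that $\mathfrak s_t(\mathfrak t)+KN(\mathfrak B)\le\varepsilon n$. At a tree cone-point $\mathfrak t_i$ (with aperture $2\delta$), the cluster $C_0$ lies in a $CK$-neighbourhood of $\mathfrak T$. This is because every point of $C$ not in $\bigcup_i\mathsf B_{r_i}(x_i)$ is, by the stopping rule, within distance $K$ of the covered region. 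Enlarging the aperture to $3\delta$, the cluster then lies in $(v+\mathcal Y^-)\cup(v+\mathcal Y^+)$ outside a ball of radius $C_\delta K$ around $\mathfrak t_i$, for any $v$ in that ball. Among these cone-points, keep a $\tfrac{c}{K}n$-sized subfamily that is separated by at least $10C_\delta K$ along the trunk.

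\textbf{Step 2 (local surgery).} For each selected tree cone-point, the cluster inside the local ball $\mathsf B_{C_\delta K}(\mathfrak t_i)$ is arbitrary, and we want to turn it into a genuine cluster cone-point, namely a configuration whose slab around $v^\parallel$ meets $C$ only in $\{v-e,v,v+e\}$. I would use a finite-energy / resampling map. Given a configuration with a good tree cone-point at $\mathfrak t_i$ but no cluster cone-point in the corresponding ball, modify the edges inside $\mathsf B_{C_\delta K}(\mathfrak t_i)$ as follows. Close all edges of $C$ crossing the slab, and reconnect the incoming and outgoing parts through a straight segment along $e$, using only the boundary vertices where $C$ enters and exits. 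The map has cost at most $\max(p,1-p)^{-c K^d}$ and multiplicity at most $2^{cK^d}$. Hence, conditionally on the outside, the probability that a given good tree cone-point fails to produce a cluster cone-point is at most $1-q$ for some $q=q(K,\delta,p)>0$.

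\textbf{Step 3 (counting).} Running this over the separated family with a standard multi-box surgery argument shows that the probability of having fewer than $\rho n$ cluster cone-points while having $\ge c n/K$ good separated tree cone-points is at most
\[
\binom{cn/K}{\rho n}\,(1-q)^{cn/K-\rho n}\,e^{-\xi_p(x)+o(n)}.
\]
Take $\rho\ll q/K$. The $e^{o(n)}$ loss comes from the skeleton energy bounds: the factor $e^{cMn/(\eta K)}$ from \eqref{eq:boundary-cost}, the surcharge, and the entropy $e^{cm\log K}$. These can be made smaller than $(1-q)^{-cn/(2K)}$ only if $q$ does not deteriorate as $K$ grows. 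This is the main obstacle.

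I would resolve it as in \cite{CIV08}. Perform the surgery not inside the coarse-graining ball, but at a fixed scale $R_0$ independent of $K$ within each trunk increment. Use the cone-point structure at scale $K$ only to guarantee that the whole cluster crosses a thin tube near the trunk segment, with its intersection with the slab confined to an $R_0$-box up to an exponentially small error in $K$, via \eqref{eq:local-energy}. The finite-energy cost is then $q(R_0)$, independent of $K$. Uniformity in $t$ follows because all constants depend only on $\delta,\eta,K,M$.
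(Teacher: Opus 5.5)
Your derivation of the second bound from \eqref{eq:cones-thin} and Theorem~\ref{thm:main} is exactly the paper's argument. Steps~1--2 are the standard surgery that the paper takes, without detail, from \cite[Section~2.9]{CIV08}. The gap is in Step~3.

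\textbf{Why Step~3 does not close.} You charge the surgery estimate with the skeleton losses: $e^{cMn/(\eta K)}$ from \eqref{eq:boundary-cost}, the surcharge, and the entropy $e^{cm\log K}$. You then try to beat them with $(1-q)^{cn/K}$. This bookkeeping cannot work, and your $R_0$ remedy does not repair it. With one surgery site per trunk increment, the gain is $\exp\{-c|\log(1-q)|\,n/K\}$. The losses are $\exp\{c(M/\eta+\log K)\,n/K\}$. So even a $K$-independent $q=q(R_0)<1$ loses once $K$ is large and $\eta$ small. Separately, the claim that $C$ meets the slab inside an $R_0$-box ``via \eqref{eq:local-energy}'' is unsupported. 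That estimate only bounds the weighted probability that the local cluster in $A_i$ reaches $\xi_p$-distance $K-a$. It says nothing about the width of $C$ at a scale $R_0\ll K$.

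\textbf{The missing observation.} The skeleton energy--entropy bounds are needed only once, in Theorem~\ref{thm:tree-cones}. There they discard, at cost $Ce^{-\xi_p(x)-cn}$, the event of having fewer than $(1-\varepsilon)N(\mathfrak t)\ge cn/K$ tree cone-points. Converting a separated family of tree cone-points into cluster cone-points should then be compared directly with the connection probability. The multi-box finite-energy argument bounds the probability of $E_x(M)$, at least $cn/K$ good separated tree cone-points, and at most $\rho n$ cluster cone-points by
\[
 (\rho n+1)\binom{cn/K}{\rho n}(1-q)^{cn/K-\rho n}\,G_p(x)
 \le e^{-c(K)n}\,e^{-\xi_p(x)},
\]
using \eqref{eq:pointwise}. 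No $e^{o(n)}$ factor appears. Because $K$ is fixed once and for all, $q=q(K,\delta,p)>0$ may deteriorate arbitrarily with $K$, and taking $\rho\ll q(K)/K$ suffices.

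\textbf{Smaller points in Step~2.}
\begin{itemize}
\item The local cost is $\min(p,1-p)^{-cK^d}$, not $\max(p,1-p)^{-cK^d}$.
\item In the supercritical regime the surgery must also close the remaining edges of the ball, so that the rerouted segment does not attach $C_0$ to another, possibly infinite, cluster.
\item The skeleton depends on the configuration inside the balls. Your ``conditionally on the outside'' step therefore requires choosing the surgery sites in a way that is stable under the modification.
\end{itemize}
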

\begin{proof}
The exact same proof as in~
\cite[Section~2.9, Theorem~2.2]{CIV08} shows \eqref{eq:cones-thin}
Then,
\begin{align*}
 &\mathbb P_p\bigl(x\in C_0,\ |C_0|<\infty,\
       \#_\delta^{\mathrm{cone}}(C_0;0,x)\le\rho n\bigr)\\
 &\quad\le
 \mathbb P_p\bigl(E_x(M),\
       \#_\delta^{\mathrm{cone}}(C_0;0,x)\le\rho n\bigr)
 +\mathbb P_p(x\in C_0,\ Mn<|C_0|<\infty)\\
 &\quad\le Ce^{-\xi_p(x)-cn^\beta},
\end{align*}
which is what we wanted.
\end{proof}
\emph{Remark : } Notice that we actually recover a form of separation of masses on irreducible connections.
\subsubsection{Decomposition of \texorpdfstring{$C$}{C} into irreducible pieces}
\label{sec:pieces}
\paragraph{Irreducible pieces.}
We are now going to associate a directed random walk to every cluster and study its properties. 
Fix $t_0\in\partial\mathsf K_p$.
The cones $\mathcal Y^\pm$ and $e$ are
those defined from $t_0$.
Define the height of a vertex $z$ to be 
\[
 z^\parallel :=\frac{t_0\cdot z}{t_0\cdot e}.
\]
We denote by $\mathcal I_{\mathrm b}$, $\mathcal I_{\mathrm f}$ and $\mathcal I$
the sets of backward-irreducible, forward-irreducible and irreducible pieces,
respectively, defined as follows.
\begin{itemize}
\item $C\in \mathcal I_{\mathrm b}$ if $0\in C$, $v$ is a backward cone-point, $v^\parallel\ge0$, and $v$ is the only cone-point of nonnegative height. Define $X(C):=v$.
\item $C\in \mathcal I_{\mathrm f}$ if $0\in C$, $v$ is a forward cone-point of $C$, $v^\parallel\le0$, and $v$ is the only cone-point of nonpositive height. Define $X(C):=-v$.
\item $C\in \mathcal I$ if $0$ is its only forward cone-points and $v$ its only backward cone-point, $v^\parallel>0$. Define $X(C)=v$.
\end{itemize}

\begin{lemma}
A finite cluster containing $0,x$, with at least two cone-points
between their heights, has a unique decomposition
\[
 C=C_{\mathrm b}\sqcup C_1\sqcup\cdots\sqcup C_N\sqcup C_{\mathrm f},
\]
where $N\ge 1$ and $C_{\mathrm b}\in \mathcal I_{\mathrm b}$, $C_{\mathrm f}\in \mathcal I_{\mathrm f}$, $C_i\in \mathcal I$ are translated and joined
at their marked endpoints, and
$x=X(C_{\mathrm b})+\sum_iX(C_i)+X(C_{\mathrm f})$.
Conversely every such concatenation gives one of these clusters.
\end{lemma}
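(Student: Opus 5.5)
We would prove this by cutting $C$ at its cone-points and checking that every cone-point of the whole cluster is also a cone-point of the piece that contains it. Let $v_1,\dots,v_{N+1}$ be the cone-points of $C$ whose heights lie in $[0,x^\parallel]$, listed so that $v_1^\parallel<\cdots<v_{N+1}^\parallel$. The order is strict, since the slab condition $C\cap\{z: v^\parallel-1\le z^\parallel\le v^\parallel+1\}=\{v-e,v,v+e\}$ forces distinct cone-points to have heights differing by at least $1$. The hypothesis gives $N\ge1$.

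\emph{Step 1: cutting.} For each $k$ we cut $C$ at $v_k$: we delete the edge $\{v_k-e,v_k\}$ and keep $v_k$ as the shared endpoint of the two pieces. More precisely, let $C_{\mathrm b}$ be the part of $C$ at height $\le v_1^\parallel$, i.e.\ the part in $v_1+\mathcal Y^-$, containing $0$. Let $C_k$ be the part between $v_k$ and $v_{k+1}$, lying in $(v_k+\mathcal Y^+)\cap(v_{k+1}+\mathcal Y^-)$, and let $C_{\mathrm f}$ be the part above $v_{N+1}$. The convention for whether $v_k$ belongs to the lower or upper piece must match the definitions: the lower piece ends at a backward cone-point $\{v,v-e\}$, and the upper piece starts at a forward cone-point $\{v,v+e\}$. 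We would therefore cut the edge $\{v_k,v_k+e\}$ and assign $v_k$ to the lower piece, with $v_k+e$ becoming the forward cone-point of the upper piece, after translation. The slab condition shows that this edge is the only edge of $C$ crossing height $v_k^\parallel+\tfrac12$, so each piece is connected and the pieces are disjoint. The cone condition $C\subset(v_k+\mathcal Y^-)\cup(v_k+\mathcal Y^+)$ shows that each piece lies in the correct cone.

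\emph{Step 2: irreducibility.} We must check that each piece belongs to the right class. Suppose $w$ is a forward (or backward) cone-point of a piece $C_k$, other than its endpoints. Because $C_k$ is sandwiched between $v_k$ and $v_{k+1}$, we want to show that $w$ is a cone-point of $C$. This is the main obstacle. It requires a convexity argument: $\mathcal Y^+$ is a convex cone, so $\mathcal Y^++\mathcal Y^+\subset\mathcal Y^+$. The parts of $C$ beyond $v_{k+1}$ lie in $v_{k+1}+\mathcal Y^+\subset w+\mathcal Y^+$ whenever $v_{k+1}\in w+\mathcal Y^+$, and similarly on the other side. The slab condition at $w$ is local and lies inside $C_k$ because of the height separation. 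Hence $w$ would be one of the $v_j$, a contradiction. The same argument, using cone-points of $C$ of negative height or of height larger than $x^\parallel$, applies to $C_{\mathrm b}$ and $C_{\mathrm f}$. Here we interpret ``only cone-point of nonnegative height'' relative to the piece, after translating so that $0$ sits at the base. The identity $x=X(C_{\mathrm b})+\sum_i X(C_i)+X(C_{\mathrm f})$ then telescopes, after adjusting each $X$ by the unit shift $e$ produced by the cut edges, consistently with the definitions.

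\emph{Step 3: uniqueness and converse.} Uniqueness follows because any such decomposition must cut at exactly the cone-points of $C$ in the height window. Step 2 shows that the junctions are cone-points of $C$. Irreducibility of the pieces, combined with the reverse direction of the convexity argument, shows that no other cone-point exists. For the converse, we take pieces $C_{\mathrm b},C_1,\dots,C_N,C_{\mathrm f}$ from the three classes and glue them by translation and the edges $\{v,v+e\}$. We verify once more, via $\mathcal Y^++\mathcal Y^+\subset\mathcal Y^+$ and the local slab conditions, that every junction is a cone-point of the glued graph. The glued graph is connected, so it is a cluster of the stated type, and the two maps are mutually inverse.
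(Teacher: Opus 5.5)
Your overall strategy is the standard argument of \cite[Section~2.10]{CIV08}, to which the paper defers. You cut $C$ at its cone-points in the window $[0,x^\parallel]$, transfer the cone condition using that $\mathcal Y^+$ is a convex cone (so $\mathcal Y^++\mathcal Y^+\subset\mathcal Y^+$), and use that the slab condition is local. But Step~1, as written, fails. You state two incompatible cutting rules and settle on cutting the edge $\{v_k,v_k+e\}$, giving $v_k$ to the lower piece and making $v_k+e$ the forward cone-point of the upper piece. Such an upper piece need not belong to $\mathcal I$ or $\mathcal I_{\mathrm f}$. For $v_k+e$ to be a forward cone-point, the piece must lie in $v_k+e+\mathcal Y^+$ and meet the slab $\{z: v_k^\parallel\le z^\parallel\le v_k^\parallel+2\}$ exactly in $\{v_k+e,v_k+2e\}$. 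However, $v_k$ being a cone-point of $C$ only gives containment in $v_k+\mathcal Y^+$, and it says nothing about vertices of $C$ at heights in $(v_k^\parallel+1,v_k^\parallel+2]$. Moreover, the edges $\{v_k,v_k+e\}$ then belong to no piece, and the displacements add up to $x-(N+1)e$ instead of $x$. The ``adjustment by the unit shift $e$'' you invoke is not available, since the statement asserts $x=X(C_{\mathrm b})+\sum_iX(C_i)+X(C_{\mathrm f})$ with $X$ exactly as defined.

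The definitions force a different convention. The lower piece must have $v_k$ as a backward cone-point (slab $\{v_k-e,v_k\}$), and the upper piece must have the \emph{same} vertex $v_k$ as a forward cone-point (slab $\{v_k,v_k+e\}$). So you should split $C$ at the vertex $v_k$, not at an edge:
\begin{itemize}
\item $C_k$ is the part of $C$ at heights in $[v_k^\parallel,v_{k+1}^\parallel]$, translated by $-v_k$;
\item $C_{\mathrm b}$ is the part at heights $\le v_1^\parallel$, not translated;
\item $C_{\mathrm f}$ is the part at heights $\ge v_{N+1}^\parallel$, translated by $-x$.
\end{itemize}
Consecutive pieces then share exactly one vertex, and $\sqcup$ is an edge-disjoint union. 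This is also what the weights $\widetilde{\mathsf w}$ encode. Multiplying the probabilities of two pieces that share $v_k$ produces exactly one spurious factor $1-p$ for each of the $2d$ edges at $v_k$. The factors $(1-p)^{-2d}$ and $(1-p)^{-d}$ in $\widetilde{\mathsf w}$ compensate for exactly this, in total $(1-p)^{-2d}$ per junction.

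With this convention, $v_k$ has the required one-sided properties directly from the definition of a cone-point of $C$. The displacements $X(C_{\mathrm b})=v_1$, $X(C_k)=v_{k+1}-v_k$, $X(C_{\mathrm f})=x-v_{N+1}$ telescope to $x$ with no correction. You should also read irreducibility as the absence of two-sided cone-points strictly between the endpoints, since the literal condition ``$0$ is its only forward cone-point'' is automatic. Your Steps~2 and~3 then go through: every vertex of $C$ outside $C_k$ has height $\le v_k^\parallel-1$ or $\ge v_{k+1}^\parallel+1$, and hence misses the slab of any $w$ with $v_k^\parallel<w^\parallel<v_{k+1}^\parallel$.
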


\begin{proof}
Same as \cite[Section~2.10]{CIV08}.
\end{proof}

\paragraph{The weights.}
We define the following weights
\begin{equation}
 \widetilde{\mathsf w}(C)=
 \begin{cases}
 (1-p)^{-2d}\mathbb P_p(C_0=C),&C\in \mathcal I,\\
 (1-p)^{-d}\mathbb P_p(C_0=C),&C\in \mathcal I_{\mathrm b}\sqcup \mathcal I_{\mathrm f}.
 \end{cases}
\end{equation}
This gives us  : 
\begin{equation}\label{eq:factorisation}
 \mathbb P_p(C_0=C)=\widetilde{\mathsf w}(C_{\mathrm b})\widetilde{\mathsf w}(C_{\mathrm f})\prod_{i=1}^N\widetilde{\mathsf w}(C_i).
\end{equation}

We then consider the weights $\mathsf w_t(C)=e^{t\cdot X(C)}\widetilde{\mathsf w}(C)$ and
\begin{align*}
 \mathsf p_t(y)&:=\sum_{\substack{C\in \mathcal I\\X(C)=y}}\mathsf w_t(C),\\
 \rho_{\mathrm b,t}(y)&:=\sum_{\substack{C\in \mathcal I_{\mathrm b}\\X(C)=y}}\mathsf w_t(C),\\
 \rho_{\mathrm f,t}(y)&:=\sum_{\substack{C\in \mathcal I_{\mathrm f}\\X(C)=y}}\mathsf w_t(C).
\end{align*}

The following proposition provides the directed random walk representation of \(G_p(x)\) : 

\begin{proposition}\label{lemma:random_walk}
For $t\in\mathsf K_p$ sufficiently close to $t_0$, there exists \(C,c>0\) such that 
\[\vert r_t(x)\vert \leq Ce^{-c\norm{x}^\beta} \]
with 
\[
e^{t\cdot x}G_p(x) - \sum_{n\geq 1}\sum_{a,b,y_1,...,y_n}\mathbf1_{\{a+y_1+...+y_n+b=x\}}\rho_{\mathrm b,t}(a)\rho_{\mathrm f,t}(b)\prod_{i=1}^n \mathsf p_t(y_i)=:r_t(x)
\]
\end{proposition}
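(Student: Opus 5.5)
The plan is to write $e^{t\cdot x}G_p(x)$ as the sum over finite clusters $C\ni 0,x$ of $e^{t\cdot x}\mathbb P_p(C_0=C)$, and to split this sum according to whether $C$ has at least two cone-points between the heights of $0$ and $x$. On the clusters that do, the decomposition lemma gives a unique factorisation $C=C_{\mathrm b}\sqcup C_1\sqcup\cdots\sqcup C_N\sqcup C_{\mathrm f}$ with $x=X(C_{\mathrm b})+\sum_iX(C_i)+X(C_{\mathrm f})$. Combining this with \eqref{eq:factorisation} and the linearity $e^{t\cdot x}=e^{t\cdot X(C_{\mathrm b})}e^{t\cdot X(C_{\mathrm f})}\prod_i e^{t\cdot X(C_i)}$, the contribution of these clusters is exactly the convolution sum in the statement. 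The converse part of the lemma shows that every concatenation is counted, and counted once. Hence
\[
 r_t(x)=-\,e^{t\cdot x}\,\mathbb P_p\bigl(x\in C_0,\ |C_0|<\infty,\ \text{fewer than two cone-points of }C_0\text{ between }0\text{ and }x\bigr),
\]
and it suffices to bound this probability. In particular $r_t(x)\le 0$, so only an upper bound on its absolute value is needed.

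For $x$ in a cone around the direction dual to $t_0$, meaning $\mathfrak s_{t_0}(x)\le a_0\|x\|$, Theorem~\ref{prop:cones} applies. The event above is contained in $\{\#_\delta^{\mathrm{cone}}(C_0;0,x)\le\rho\|x\|\}$ once $\|x\|$ is large, so the probability is at most $Ce^{-\xi_p(x)-c\|x\|^\beta}$. Since $t\in\mathsf K_p$, we have $t\cdot x\le\xi_p(x)$, which yields $|r_t(x)|\le Ce^{-c\|x\|^\beta}$ for these $x$. For $x$ outside this cone, so that $\mathfrak s_{t_0}(x)>a_0\|x\|$, no cone-point input is needed. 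Here one only uses \eqref{eq:pointwise}, $G_p(x)\le e^{-\xi_p(x)}$, together with the bound $\xi_p(x)-t\cdot x\ge \mathfrak s_{t_0}(x)-|(t-t_0)\cdot x|\ge (a_0-\|t-t_0\|)\|x\|$. Taking $t$ close enough to $t_0$ that $\|t-t_0\|<a_0/2$ gives exponential decay $e^{-a_0\|x\|/2}$, which is stronger than required.

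One gap remains in the first regime. The cones and heights in Theorem~\ref{prop:cones} are defined from a vector $t$ adapted to $x$, whereas the decomposition uses cones built from $t_0$. I would handle this by applying Theorem~\ref{prop:cones} with $t=t_0$ itself. Its constants are uniform in $t\in\partial\mathsf K_p$, and its hypothesis is precisely $\mathfrak s_{t_0}(x)\le a_0\|x\|$. The cone-points it produces are then cone-points for the $t_0$-cones, which is exactly what the decomposition lemma requires. I expect the main obstacle to be verifying that the exponential gain survives replacing $\xi_p(x)$ by $t\cdot x$ when $t\neq t_0$. This is harmless because the slack satisfies $\xi_p(x)-t\cdot x\ge0$ for every $t\in\mathsf K_p$, and the prefactor in the convolution identity is exactly $e^{t\cdot x}$. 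The same argument should also give the constants $C,c$ uniformly for $t$ in a small neighbourhood of $t_0$ within $\mathsf K_p$.
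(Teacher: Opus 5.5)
Your proposal is correct and follows the paper's proof. The factorisation \eqref{eq:factorisation} over clusters with at least two $t_0$-cone-points gives the convolution sum, the remainder is controlled by Theorem~\ref{prop:cones} applied with $t=t_0$, and your treatment of $x$ with $\mathfrak s_{t_0}(x)>a_0\|x\|$ via \eqref{eq:pointwise} supplies a detail the paper's one-line proof leaves implicit (the paper uses the same split when proving \eqref{eq:tilted-tails}). One harmless sign slip: the remainder is $r_t(x)=+e^{t\cdot x}\,\mathbb P_p(x\in C_0,\ |C_0|<\infty,\ \text{fewer than two cone-points})\ge 0$, not $\le 0$, and this does not affect your bound on $|r_t(x)|$.
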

\begin{proof}
The result follows directly from~\eqref{eq:factorisation} and the control on \(r_t(x)\) follows from Theorem~\ref{lemma:few_conepoints}
\end{proof}
We will need several properties of the weights above. Firstly, we will show that they inherit from the stretched exponential decay. Moreover, $\mathsf p_t$ actually defines a probability measure on \(\bbZ^d\) for $t\in\partial\mathsf K_p$ sufficiently close to $t_0$. This will in particular imply that all of the moments of the displacements exist, and hence we will be able to use standard probability results to conclude Theorem~\ref{theorem:OZ}.
\begin{proposition}\label{proposition:tilded}

For $t\in\mathsf K_p$ sufficiently close to $t_0$, uniformly in $t,y$,
\begin{equation}\label{eq:tilted-tails}
 \mathsf p_t(y)+\rho_{\mathrm b,t}(y)+\rho_{\mathrm f,t}(y)
 \le Ce^{-c\|y\|^\beta}.
\end{equation}
For $t\in\partial\mathsf K_p$ in this neighbourhood,
\begin{equation}
 H(t):=\sum_y\mathsf p_t(y)=1.
\end{equation}
\end{proposition}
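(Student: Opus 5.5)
We prove the two claims separately: first the tail bound \eqref{eq:tilted-tails}, then the normalisation $H(t)=1$.

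\textbf{Tail bound.} Fix $y$ and consider an irreducible piece $C\in\mathcal I$ with $X(C)=y$. Gluing closed edges around its two endpoints (at cost $(1-p)^{\pm 2d}$) identifies $\widetilde{\mathsf w}(C)$, up to constants, with the probability of a finite cluster containing $0$ and $y$. In that cluster no vertex strictly between the heights of $0$ and $y$ is a cone-point. Hence
\[
 \mathsf p_t(y)\le Ce^{t\cdot y}\,\mathbb P_p\bigl(y\in C_0,\ |C_0|<\infty,\ \#_\delta^{\mathrm{cone}}(C_0;0,y)\le 2\bigr).
\]
We bound the right-hand side by splitting according to the direction of $y$.
\begin{itemize}
\item \emph{Directions with $\mathfrak s_{t_0}(y)\le a_0\|y\|$.} Theorem~\ref{prop:cones} applies with $t_0$ and gives the bound $Ce^{t\cdot y-\xi_p(y)-c\|y\|^\beta}$. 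For $t\in\mathsf K_p$ we have $t\cdot y\le\xi_p(y)$, so this is at most $Ce^{-c\|y\|^\beta}$.
\item \emph{Remaining directions.} Here $\xi_p(y)-t_0\cdot y\ge a_0\|y\|$. For $t$ close enough to $t_0$, say $\|t-t_0\|\le a_0/2$, this gives $\xi_p(y)-t\cdot y\ge a_0\|y\|/2$. The bound $G_p(y)\le e^{-\xi_p(y)}$ from \eqref{eq:pointwise} then yields exponential decay $e^{-a_0\|y\|/2}$.
\end{itemize}

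The same scheme handles $\rho_{\mathrm b,t}$ and $\rho_{\mathrm f,t}$. Each such piece is a finite cluster joining $0$ to $\pm y$ with no cone-point in between, on one side. One point needs care: the cone geometry is defined from $t_0$ only, while Theorem~\ref{prop:cones} is stated uniformly for the cones built from any $t\in\partial\mathsf K_p$. We apply it with the fixed cones of $t_0$ and the given $y$. Its proof only uses $\mathfrak s_{t_0}(y)\le a_0\|y\|$, which holds in the first case above.

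\textbf{Normalisation $H(t)=1$ for $t\in\partial\mathsf K_p$.} We use a generating function argument. For $s=\lambda t$ with $\lambda<1$, sum Proposition~\ref{lemma:random_walk} over $x$. This gives
\[
 \sum_x e^{s\cdot x}G_p(x)=\frac{\hat\rho_{\mathrm b}(s)\hat\rho_{\mathrm f}(s)\,H(s)}{1-H(s)}+\sum_x r_s(x).
\]
Each term is controlled as follows.
\begin{itemize}
\item The left side is finite, since $s$ lies in the interior of $\mathsf K_p$ and we use \eqref{eq:pointwise}.
\item The error sum $\sum_x r_s(x)$ is bounded uniformly in $\lambda$ by the stretched-exponential bound on $r_s$.
\item Finiteness of the left side forces $H(s)<1$.
\end{itemize}
By monotone convergence (all weights are nonnegative) and the uniform tails \eqref{eq:tilted-tails}, $H(s)\uparrow H(t)\le1$ and $\hat\rho_{\mathrm{b}}(s)\hat\rho_{\mathrm f}(s)\to\hat\rho_{\mathrm b}(t)\hat\rho_{\mathrm f}(t)$, which is finite.

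Lemma~\ref{lem:point-plane-duality} says the left side diverges as $\lambda\uparrow1$. Since the remainder stays bounded and $\hat\rho_{\mathrm b}\hat\rho_{\mathrm f}$ stays bounded, the divergence must come from $1/(1-H(s))$. This forces $H(t)=1$.

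The main obstacle is uniformity. Theorem~\ref{prop:cones} and Proposition~\ref{lemma:random_walk} must hold with constants uniform over $t$ in a neighbourhood of $t_0$, with the cone geometry frozen at $t_0$. This is needed both to get uniform tails and to pass to the limit in the remainder. We would check that the coarse-graining bounds depend on $t$ only through $\mathfrak s_t$, which is continuous in $t$, and shrink the neighbourhood so that $a_0$ can be replaced by $a_0/2$.
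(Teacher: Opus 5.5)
Your proposal is correct and follows the paper's proof. The tail bound uses the same split on whether $\mathfrak s_{t_0}(y)\le a_0\|y\|$: the cone-point theorem together with $t\cdot y\le\xi_p(y)$ in the first case, and the pointwise bound $G_p\le e^{-\xi_p}$ in the second. The normalisation uses the same renewal identity $G=R+BF\,H/(1-H)$ combined with the divergence from Lemma~\ref{lem:point-plane-duality}. Your passage to the boundary along $\lambda t$, $\lambda\uparrow 1$, only makes explicit the bound $H(t)\le 1$, which the paper uses without comment.
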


\begin{proof}[Proof of~\eqref{eq:tilted-tails}]
We first consider $\mathsf p_t$. We consider two cases : if $\surcharge_{t_0}(y)>a_0\norm{y}$, then, for
$\norm{t-t_0}<a_0/2$,
\[
\mathsf p_t(y)\leq (1-p)^{-2d}e^{t\cdot y}G_p(y)\leq (1-p)^{-2d}e^{-a_0\norm{y}/2}.
\]
Otherwise, since irreducible pieces have exactly two
cone-points, Theorem~\ref{lemma:few_conepoints} gives
for $\norm{y}$ sufficiently large,
\[
 \mathsf p_t(y)\leq Ce^{t\cdot y-\xi_p(y)-c\norm{y}^{\beta}}
 \leq Ce^{-c\norm{y}^{\beta}},
\]
where we used $t\in\mathsf K_p$.

The same bounds hold for $\rho_{\mathrm b,t}$ and $\rho_{\mathrm f,t}$, and the proof is a classical OZ theory argument.
Let us define the following generating functions : 
\begin{align*}
 G(t)&:=\sum_{x\in\bbZ^d}e^{t\cdot x}G_p(x),\\
 R(t)&:=\sum_{x\in\bbZ^d}r_t(x),
\end{align*}
and \( H(t):=\sum_{y\in\bbZ^d}\mathsf p_t(y), ~B(t):=\sum_{y\in\bbZ^d}\rho_{\mathrm b,t}(y), ~F(t):=\sum_{y\in\bbZ^d}\rho_{\mathrm f,t}(y).\)
For any \(t\) in the interior of \(\mathsf K_p\), all of these series converge. Hence, using the decomposition above, we get the following renewal equation : 
\begin{align*}
 G(t)&=R(t)+B(t)F(t)\sum_{n\geq1}H(t)^n\\
 &=R(t)+B(t)F(t)\dfrac{H(t)}{1-H(t)}.
\end{align*}
Notice now that by estimates above, for \(t\in\partial \mathsf K_p\) close to \(t_0\), \(R(t),B(t),F(t)\) are all finite and \(G(t)=\infty\) by Lemma~\ref{lem:point-plane-duality}. Together with $H(t)\le1$, this implies that \(H(t)=1\) finishing the proof.
\end{proof}
For $t\in\partial\mathsf K_p$ sufficiently close to $t_0$, write
$\mathbb P_{t,0}$ for the law of the random walk started at $S_0=0$
with increment law $\mathsf p_t$, and $\mathbb E_{t,0}$ for its expectation.

We will now study a random walk \(S_n=\sum_{i=1}^{n}X_i\) where \(X_i\) are sampled according to \(\mathsf p_t\). We will first show that the expected value of these random variables point to the direction \(x/\norm{x}\). In previous works, this was done using the exponential moments coming from the separation of masses, which is unavailable here, but we give a simpler argument.

\begin{proposition}\label{prop:drift}
For $x\ne0$ and $t\in\partial\mathsf K_p$ sufficiently close to $t_0$ with $t\cdot x=\xi_p(x)$, there exists \(c>0\) such that
\[\mathbb E_{t,0}[X_1]=c\cdot\dfrac{x}{\norm{x}}\]
\end{proposition}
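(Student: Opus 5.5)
The plan is to read $m:=\mathbb E_{t,0}[X_1]$ as a one-sided gradient of $H(s)=\sum_y\mathsf p_s(y)$ at $t$, and to use the identity $H\equiv1$ on $\partial\mathsf K_p$ near $t_0$ from Proposition~\ref{proposition:tilded}. The goal is to show that the hyperplane $m^\perp$ is the unique supporting hyperplane of $\mathsf K_p$ at $t$. Since $t\cdot x=\xi_p(x)$ says exactly that $x$ is an outer normal to $\mathsf K_p$ at $t$, this forces $x$ to be a positive multiple of $m$. Note that $\mathsf p_s(y)=e^{(s-t)\cdot y}\mathsf p_t(y)$. By \eqref{eq:tilted-tails}, uniformly for $s\in\mathsf K_p$ near $t_0$, we have $\mathsf p_s(y)\le Ce^{-c\|y\|^\beta}$, so every polynomial moment of $\mathsf p_s$ is finite and uniformly bounded. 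In particular $m$ is well defined.

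\emph{Step 1 (first-order inequality).} For $s\in\mathsf K_p$ near $t$, convexity of $u\mapsto e^u$ gives
\[
H(s)=\sum_y e^{(s-t)\cdot y}\mathsf p_t(y)\ \ge\ H(t)+(s-t)\cdot m=1+(s-t)\cdot m .
\]
I will also need $H(s)\le1$ on $\mathsf K_p$ near $t_0$. This follows from the renewal equation in the proof of Proposition~\ref{proposition:tilded}: in the interior, $G(s)<\infty$ with $B,F>0$ forces $H(s)<1$, and $H\le 1$ extends to the boundary. Hence $(s-t)\cdot m\le0$ for all $s\in\mathsf K_p$ near $t$. By convexity of $\mathsf K_p$, $t$ then maximizes $s\mapsto s\cdot m$ over all of $\mathsf K_p$, so $m$ is an outer normal at $t$.

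\emph{Step 2 (second-order upper bound).} Use $|e^u-1-u|\le u^2(1+e^u)$, valid for all real $u$. For $s\in\partial\mathsf K_p$ near $t$ this gives
\[
0=H(s)-1-(s-t)\cdot m+(s-t)\cdot m\ \le\ \|s-t\|^2\sum_y\|y\|^2\bigl(\mathsf p_t(y)+\mathsf p_s(y)\bigr)+(s-t)\cdot m .
\]
The sum is bounded by the uniform stretched-exponential tails, so $(s-t)\cdot m\ge -C\|s-t\|^2$ along $\partial\mathsf K_p$ near $t$. Combined with Step 1, $\partial\mathsf K_p$ is tangent at $t$ to the hyperplane $t+m^\perp$. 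To conclude I need $m\neq0$. This holds because $X_1^\parallel>0$ for irreducible pieces, so $t\cdot m>0$, at least for $t$ close to $t_0$ where $t\cdot y$ and $t_0\cdot y$ are comparable on the cones $\mathcal Y^+$. A convex body whose boundary is tangent to a hyperplane at a point has a unique supporting hyperplane there, so the normal cone of $\mathsf K_p$ at $t$ is the ray $\mathbb R_+m$.

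\emph{Step 3 (conclusion).} By definition of $\mathsf K_p$, $s\cdot x\le\xi_p(x)=t\cdot x$ for all $s\in\mathsf K_p$, so $x$ lies in the normal cone at $t$. Hence $x=\lambda m$ with $\lambda>0$, that is, $m=c\,x/\|x\|$ with $c=\|m\|>0$.

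The main obstacle is Step 2. Since we only have stretched-exponential, not exponential, moments, $H$ need not be finite outside $\mathsf K_p$, so no implicit-function argument is available. The point is that the Taylor remainder only involves $\mathsf p_s$ for $s$ \emph{inside} $\mathsf K_p$, where \eqref{eq:tilted-tails} gives uniform control. A secondary technical point is justifying $H\le1$ on the boundary, including uniformity in $s$ near $t_0$; I would obtain it by monotone limits from interior points.
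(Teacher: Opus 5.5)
Your proof is correct and follows the paper's route. Both arguments Taylor-expand $H$ at $t$ and use $H\equiv1$ on $\partial\mathsf K_p$ to get $m\cdot(u-t)\ge -C\|u-t\|^2$ there; they then combine this with $t\cdot m>0$ and the fact that $x$ is an outer normal at $t$ to force $x\parallel m$. The only difference in the last step is that the paper constructs an explicit boundary point $u_\varepsilon$ on a segment, where you invoke uniqueness of the supporting hyperplane. Your Step 1, and with it the need to justify $H\le1$ on $\mathsf K_p$, could be dropped, since the one-sided bound plus $t\cdot m>0$ already rules out any outer normal that is not a positive multiple of $m$. Your remainder estimate via the tails of $\mathsf p_s$ for $s\in\partial\mathsf K_p$ is a cleaner justification of the Taylor step than the paper's appeal to moments of $\mathsf p_t$ alone.
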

\begin{proof}
Firstly, one can use Taylor's theorem on \(H\) for vectors on the boundary of \(\partial \mathsf K_p\) since the sum \(\sum_y \norm{y}^k \mathsf p_t(y)\) is uniformly bounded. Set $s=x/\|x\|$ and $m=\mathbb E_{t,0}[X_1]$.
Since every irreducible displacement has positive $t_0$ projection, one has $t_0\cdot m>0$. Consider now 
\[
 H(u)=\sum_y \mathsf p_t(y)e^{(u-t)\cdot y}.
\]
The preceding renewal argument implies that $H(u)\leq1$ in $\mathsf K_p$ near $t$, with equality on its boundary.
By Taylor near $t$,
\[
 H(u)=1+m\cdot(u-t)+O(\|u-t\|^2).
\]
In particular, if \(u\) is on the boundary one has \[m\cdot(u-t)=O(\|u-t\|^2).\]
 
Suppose $s\neq m/\|m\|$ and define $v=s-m/\|m\|$.
Direct calculation shows that $s\cdot v >0$, while $m\cdot v<0.$
For small enough $\varepsilon>0$, $(1-\varepsilon)t$ lies inside $\mathsf K_p$, and $t+\varepsilon v$ lies outside of it.
The segment joining their endpoints consequently contains a boundary point 
$$u_\varepsilon:=(1-\theta_\varepsilon)(1-\varepsilon)t
   +\theta_\varepsilon(t+\varepsilon v)$$
for some $\theta_\varepsilon\in (0,1)$.
Then, $u_\varepsilon-t$ must have a projection on $m$ that goes in the opposite direction of $m$, and that has a norm of order $\varepsilon$.
Indeed,
\begin{align*}
 m\cdot(u_\varepsilon-t)
 &=-\varepsilon\bigl((1-\theta_\varepsilon)m\cdot t
                     +\theta_\varepsilon(-m\cdot v)\bigr)\\
 &\le-\varepsilon\bigl((1-\theta_\varepsilon)c_1
                      +\theta_\varepsilon c_1\bigr)\\
 &=-c_1\varepsilon,
\end{align*}
where $c_1 := \min \{m\cdot t, -m\cdot v \}$, which is strictly positive for $t$ sufficiently close to $t_0$.
Since $u_\varepsilon\in\partial\mathsf K_p$, the preceding Taylor bound gives
$m\cdot(u_\varepsilon-t)=O(\varepsilon^2)$, a contradiction for sufficiently
small $\varepsilon>0$. 
\end{proof}

\begin{proposition}\label{lemma:TLL}
Consider a random walk \(S_n=\sum_{i=1}^n X_i\), with law $\mathbb P_0$, expectation $\mathbb E_0$ and $S_0=0$, where \(X_i\in\bbZ^d\) are i.i.d and \(\mathbb E_0(\norm{X_1}^l)<\infty\) for every integer \(l\ge1\). Assume that \(\mathbb E_0(X_1)=cs\) with \(c>0\) and \(s\in\mathbb{S}^{d-1}\) and that the covariance matrix of \(X_1\) is non-degenerate. Assume also that the walk is aperiodic. Then, as $k\to\infty$ with $ks\in\mathbb Z^d$, there exists \(C>0\) :
\[\sum_{N\geq 1}\mathbb P_0(S_N=ks)=\dfrac{C}{k^{(d-1)/2}}(1+o_k(1)).\]
\end{proposition}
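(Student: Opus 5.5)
We reduce the Green function $\sum_N\mathbb P_0(S_N=ks)$ to a sum of local probabilities and evaluate them with the multidimensional local CLT. Write $\mu=\mathbb E_0X_1=cs$ and $\Sigma$ for the (non-degenerate) covariance. Aperiodicity and finiteness of all moments give the local limit theorem with error control,
\[
\mathbb P_0(S_N=y)=\frac{1}{(2\pi N)^{d/2}\sqrt{\det\Sigma}}\exp\Bigl\{-\frac{(y-N\mu)\cdot\Sigma^{-1}(y-N\mu)}{2N}\Bigr\}+o\bigl(N^{-d/2}\bigr),
\]
uniformly in $y\in\mathbb Z^d$. Because we also need the error to be small relative to the Gaussian in a window of width $\sqrt N$, we will use the sharper form with error $o(N^{-d/2})\cdot(1+\|y-N\mu\|^2/N)^{-1}$. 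This form follows from the standard Fourier proof when $\mathbb E\|X_1\|^{2}<\infty$, by differentiating the characteristic function twice.

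\emph{Step 1 (localization of $N$).} Set $N_0=k/c$, so that $N_0\mu=ks$. Split the sum according to whether $|N-N_0|\le k^{1/2+\epsilon}$ or not. For $|N-N_0|>k^{1/2+\epsilon}$, the event $S_N=ks$ forces $\|S_N-N\mu\|\ge c|N-N_0|$. Using the $l$-th moment, Markov's inequality and a Rosenthal/Fuk--Nagaev bound give
\[
\mathbb P_0\bigl(\|S_N-N\mu\|\ge c|N-N_0|\bigr)\le C_l\,\frac{N^{l/2}+N}{|N-N_0|^{l}}.
\]
Summing over such $N$ (treating separately $N\le 2N_0$ and $N>2N_0$, where $N^{l/2}/N^l$ is summable) gives $O(k^{-A})$ for any $A$, once $l$ is large. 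This is negligible compared with $k^{-(d-1)/2}$.

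\emph{Step 2 (Gaussian sum).} In the window, insert the local CLT. Only the component of $ks-N\mu=(N_0-N)\mu$ along $\mu$ is nonzero, so the Gaussian term equals
\[
(2\pi N)^{-d/2}(\det\Sigma)^{-1/2}\exp\Bigl\{-\frac{(N-N_0)^2\,\mu\cdot\Sigma^{-1}\mu}{2N}\Bigr\}.
\]
Replace $N$ by $N_0$ in the prefactor and in the denominator; this costs a factor $1+O(k^{-1/2+\epsilon})$. The sum over integers $N$ is a Riemann sum for a Gaussian integral of width $\sqrt{N_0}$, which yields
\[
\sqrt{2\pi N_0/(\mu\cdot\Sigma^{-1}\mu)}\,(1+o(1)).
\]
Altogether this gives
\[
C\,N_0^{-(d-1)/2}=C'k^{-(d-1)/2}, \qquad C'=c^{(d-1)/2}(2\pi)^{-(d-1)/2}(\det\Sigma)^{-1/2}(\mu\cdot\Sigma^{-1}\mu)^{-1/2}.
\]

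\emph{Step 3 (error terms).} The errors must sum to $o(k^{-(d-1)/2})$. With the plain uniform error $o(N^{-d/2})$, the window contains $k^{1/2+\epsilon}$ terms, giving $o(k^{-d/2+1/2+\epsilon})$, which is not quite enough. This is the main obstacle. To handle it, use the weighted error above with $\|y-N\mu\|^2/N\asymp (N-N_0)^2/k$. The errors then sum to
\[
o(k^{-d/2})\sum_j\bigl(1+j^2/k\bigr)^{-1}=o(k^{-d/2})\,O(\sqrt k)=o\bigl(k^{-(d-1)/2}\bigr).
\]
If the weighted local CLT proves delicate, the fallback is to use the Edgeworth-type local expansion, available since all moments are finite, with error $O(N^{-(d+1)/2})$; summed over the window this also gives $o(k^{-(d-1)/2})$ for $\epsilon<1/2$.
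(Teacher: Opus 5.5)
Your proposal is correct and follows essentially the same route as the paper: the same split at $|N-k/c|\le k^{1/2+\varepsilon}$, the same high-moment Markov bound off the window, and the same Gaussian Riemann sum inside it (your constant agrees with the paper's $c^{(d-3)/2}/((2\pi)^{(d-1)/2}\sqrt{\det\Sigma}\sqrt{s^{\mathsf T}\Sigma^{-1}s})$). The only difference is how you control the local CLT error in the window: the paper uses exactly your fallback, the uniform bound $O(N^{-(d+1)/2})$, while your primary weighted bound $o(N^{-d/2})(1+\|y-N\mu\|^2/N)^{-1}$ is an equally valid variant that needs only second moments there.
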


\begin{proof}
Fix $0<\varepsilon<1/2$. We consider two regimes :
$|N-k/c|\le k^{1/2+\varepsilon}$ and $|N-k/c|>k^{1/2+\varepsilon}$.
Write $\Sigma$ for the covariance matrix of $X_1$ and define
\[
 g_N(z)=\frac{1}{(2\pi N)^{d/2}\sqrt{\det\Sigma}}
 \exp\!\left\{-\frac{(z-Ncs)^{\mathsf T}\Sigma^{-1}(z-Ncs)}{2N}\right\}.
\]
Under our moment and aperiodicity assumptions, the quantitative local
central limit theorem gives
\[
 \sup_{z\in\mathbb Z^d}|\mathbb P_0(S_N=z)-g_N(z)|
 \le C_1N^{-(d+1)/2}.
\]
In the first regime, summing this error gives
$O(k^{-d/2+\varepsilon})=o(k^{-(d-1)/2})$. A Gaussian Riemann sum,
with $v=(N-k/c)/\sqrt{k}$, therefore yields
\begin{align*}
 \sum_{|N-k/c|\le k^{1/2+\varepsilon}}\mathbb P_0(S_N=ks)
 &=\frac{C}{k^{(d-1)/2}}(1+o(1)),\\
 C&=\frac{c^{(d-3)/2}}
 {(2\pi)^{(d-1)/2}\sqrt{\det\Sigma}\sqrt{s^{\mathsf T}\Sigma^{-1}s}}>0.
\end{align*}

For the second regime, for every even integer $q\ge4$,
\begin{align*}
 \mathbb E_0\norm{S_N-csN}^q&\le C_qN^{q/2},\\
 \mathbb P_0(S_N=ks)&\le\frac{C_qN^{q/2}}{|N-k/c|^q}.
\end{align*}
Summing this bound separately over $k/(2c)\le N\le2k/c$ and its
complement gives
\[
 \sum_{|N-k/c|>k^{1/2+\varepsilon}}\mathbb P_0(S_N=ks)
 \le C_q\bigl(k^{1/2+\varepsilon-q\varepsilon}+k^{1-q/2}\bigr).
\]
Taking $q$ sufficiently large makes this $o(k^{-(d-1)/2})$, which
finishes the proof.
\end{proof}
Now, we are in position to prove Theorem~\ref{theorem:OZ}
\begin{proof}[Proof of Theorem~\ref{theorem:OZ}]
Fix \(x\in\bbZ^{d}\) and \(t\in\partial\mathsf K_p\) dual to \(x\). Firstly, thanks to Proposition~\ref{lemma:random_walk}, we have:
\begin{equation*}
e^{\xi_p(x)}G_p(x)=\sum_{a,b}\sum_{n\geq 1}\rho_{\mathrm b,t}(a)\rho_{\mathrm f,t}(b)\mathbb P_{t,0}(S_n=x-a-b) + O(e^{-c\norm{x}^\beta}).
\end{equation*}
Moreover, using Proposition~\ref{proposition:tilded}, we assume (up to a negligible error) that \(\norm{a}+\norm{b}\leq \norm{x}^{1/4}\). Therefore, this means that \(x-a-b=x+o(\sqrt{\norm{x}})\), and we get that 
\[\sum_{n\geq1}\mathbb P_{t,0}(S_n=x-a-b)=\dfrac{C}{\norm{x}^{\frac{d-1}{2}}}(1+o_{\norm{x}}(1))\]
Overall, since the sums above over \(a,b\) converge, we can conclude that :
\[
 G_p(x)=
 \frac{\Psi_p(x/\|x\|)}{\|x\|^{(d-1)/2}}e^{-\xi_p(x)}(1+o(1)),
\]
\end{proof}
 
\section{Acknowledgments}
K.K. gratefully acknowledges the financial support of the Swiss National
Science Foundation (SNSF) under grant number $200021\_219333$. K.K. is a member of the NCCR SwissMAP.
Both authors are deeply grateful to Yvan Velenik for his detailed reading of the manuscript, his many suggestions for improving its presentation, and his corrections of several errors. We also thank him for being an outstanding PhD supervisor. His patience and skill as a teacher were essential to our understanding of Ornstein–Zernike theory. Without his patience and all that he has taught us, neither this paper nor any of our other papers would have been possible.

\paragraph{Use of AI tools.} We used ChatGPT to check proofs, identify relevant references, and improve the writing. All the mathematical ideas, proof strategies and proofs were developed by the authors. The authors take full responsibility for the final manuscript.

\end{document}